\newtheorem{theorem}{Theorem}[section]
\newtheorem{corollary}[theorem]{Corollary}
\newtheorem{proposition}[theorem]{Proposition}
\newtheorem{conjecture}[theorem]{Conjecture}
\theoremstyle{definition}
\newtheorem{definition}[theorem]{Definition}
\newtheorem{remark}[theorem]{Remark}
\theoremstyle{property}
\DeclareFontFamily{OT1}{rsfs}{}
\DeclareFontShape{OT1}{rsfs}{n}{it}{<-> rsfs10}{}
\DeclareMathAlphabet{\curly}{OT1}{rsfs}{n}{it}
\newcommand\sfT{\mathsf T}
\newcommand\SW{\mathrm{SW}}
\newcommand\Eu{\mathrm{Eu}}
\newcommand\SL{\mathrm{SL}}
\renewcommand\O{\mathcal O}
\newcommand\PP{\mathbb P}
\newcommand\bfv{{\mathbf v}}
\newcommand\mdot{{\scriptscriptstyle\bullet}}
\newcommand\cE{\mathcal E}
\newcommand\cA{\mathcal A}
\newcommand\E{\mathbb E}
\newcommand\Coeff{\mathrm{Coeff}}
\newcommand\pt{\mathrm{pt}}
\newcommand\vir{\mathrm{vir}}
\newcommand\odd{\mathrm{odd}}
\newcommand\td{\mathrm{td}}
\newcommand\bslambda{\boldsymbol \lambda}
\newcommand\bsmu{\boldsymbol \mu}
\newcommand\C{\mathbb C}
\newcommand\FF{\mathbb F}
\newcommand\sfZ{\mathsf Z}
\newcommand\sfA{\mathsf A}
\newcommand\sfL{\mathsf L}
\newcommand\Q{\mathbb Q}
\newcommand\R{\mathbb R}
\newcommand\Z{\mathbb Z}
\newcommand\cZ{\mathcal Z}
\newcommand\s{\mathfrak s}
\newcommand\I{\mathcal I}
\newcommand\INTO{\ar@{^{(}->}[r]}
\newcommand\ch{\operatorname{ch}}
\newcommand\ev{\operatorname{ev}}
\newcommand\vd{\operatorname{vd}}
\newcommand\cob{\operatorname{cob}}
\newcommand\elg{\operatorname{ell}}
\renewcommand\hom{\mathcal{H}{\it{om}}}
\newcommand\Hom{\operatorname{Hom}}
\newcommand\Pic{\operatorname{Pic}}
\newcommand\Spec{\operatorname{Spec}\,}
\newcommand\Sym{\operatorname{Sym}}
\newcommand\beq[1]{\begin{equation}\label{#1}}
\newcommand\eeq{\end{equation}}
\newcommand\beqa{\begin{eqnarray*}}
\newcommand\eeqa{\end{eqnarray*}}
\begin{document}
\title[A rank 2 DMVV formula]{A rank 2 Dijkgraaf-Moore-Verlinde-Verlinde formula}

\author[L.~G\"ottsche and M.~Kool]{Lothar G\"ottsche and Martijn Kool}
\maketitle

\begin{abstract}
We conjecture a formula for the virtual elliptic genera of moduli spaces of rank 2 sheaves on minimal surfaces $S$ of general type. We express our conjecture in terms of the Igusa cusp form $\chi_{10}$ and Borcherds type lifts of three quasi-Jacobi forms which are all related to the Weierstrass elliptic function. We also conjecture that the generating function of virtual cobordism classes of these moduli spaces depends only on $\chi(\O_S)$ and $K_S^2$ via two universal functions, one of which is determined by the cobordism classes of Hilbert schemes of points on $K3$. We present generalizations of these conjectures, e.g.~to arbitrary surfaces with $p_g>0$ and $b_1=0$.

We use a result of J.~Shen to express the virtual cobordism class in terms of descendent Donaldson invariants. In a prequel we used T.~Mochizuki's formula, universality, and toric calculations to compute such Donaldson invariants in the setting of virtual $\chi_y$-genera. Similar techniques allow us to verify our new conjectures in many cases.
\end{abstract}

\section{Introduction}  \label{intro}

Denote by $S$ a smooth projective complex surface with $b_1(S) = 0$. For a polarization $H$ on $S$, denote by $M:=M_{S}^{H}(r,c_1,c_2)$ the moduli space of rank $r$ Gieseker $H$-stable torsion free sheaves on $S$ with Chern classes $c_1 \in H^2(S,\Z)$ and $c_2 \in H^4(S,\Z)$. Suppose there are no rank $r$ strictly Gieseker $H$-semistable sheaves with Chern classes $c_1, c_2$. Then $M_{S}^{H}(r,c_1,c_2)$ is projective. Moreover it has a  perfect obstruction theory, which was studied by T.~Mochizuki in his theory of algebraic Donaldson invariants \cite{Moc}. The virtual tangent bundle $T^{\vir}$ of $M$ is given by 
\begin{equation*} 
T^\vir = R\pi_{M*} R\hom(\E,\E)_0[1],
\end{equation*}
where $\E$ denotes a universal sheaf on $M \times S$, $\pi_M : M \times S \rightarrow M$ is projection, and $(\cdot)_0$ denotes the trace-free part.\footnote{Although $\E$ may only exist \'etale locally, $R\pi_{M*} R\hom(\E,\E)_0$ exists globally \cite[Sect.~10.2]{HL}.} 
Therefore we have a virtual cycle $[M]^{\vir}$ of degree 
\begin{equation} \label{vdformula}
\vd(M)=2rc_2-(r-1)c_1^2-(r^2-1)\chi(\O_S).
\end{equation}

The first topic of this paper is the virtual elliptic genera $Ell^\vir(M)$. Virtual elliptic genera of schemes with perfect obstruction theory were introduced by the first author and B.~Fantechi in \cite{FG}. They are defined as follows. Denote by $K^0(M)$ the $K$-group generated by locally free sheaves on $M$. For any rank $r$ vector bundle $V$ on $M$
\begin{align*}
\Lambda_t V := \sum_{n=0}^{r} [\Lambda^n V] \, t^n, \quad \Sym_t V := \sum_{n=0}^{\infty} [\Sym^n V] \, t^n.
\end{align*}
These definitions extend to complexes in $K^0(M)$ by setting $\Lambda_t(-V)=\Sym_{-t}V$ and $\Sym_t(-V)=\Lambda_{-t}V$.
The virtual elliptic genus, which refines the complex elliptic genus \cite{Hir, Wit, Kri}, is defined as
\begin{align*}
Ell^\vir(M) := y^{-\frac{\vd(M)}{2}} \chi_{-y}^{\vir}(M, \cE(T^\vir)).
\end{align*}
Here
\begin{align}
\begin{split} \label{cE}
\chi_{-y}^{\vir}(M,V) &:= \sum_{p \geq 0} (-y)^p \chi^\vir(M,V \otimes \Omega_{M}^{p,\vir}), \\
\cE(V) &:= \bigotimes_{n=1}^{\infty} \Lambda_{-yq^n} V^{\vee} \otimes \Lambda_{-y^{-1}q^n} V \otimes \Sym_{q^n} (V \oplus V^{\vee}),
\end{split}
\end{align}
and $\Omega_{M}^{p, \vir} := \Lambda^p T^{\vir \vee}$. See \cite{FG} for the definition of $\chi^\vir(M,\cdot)$.
 
In order to formulate our first conjecture, we recall notions from the theory of modular forms. See \cite{Kaw, EZ, Lib} and references therein. \\

\noindent \textbf{Siegel modular forms.} The ring of modular forms for $\SL(2,\Z)$ is $\C[G_4,G_6]$. For any even $k \geq 2$, $G_{k}$ denotes the Eisenstein series of weight $k$. It has the following expansion in $q=e^{2\pi i \tau}$
$$
G_{k}(q) = -\frac{B_{k}}{2k} + \sum_{n=1}^{\infty} \sigma_{k-1}(n) \, q^n, \ \sigma_k(n) := \sum_{d|n} d^k, \ B_k = k\textrm{th Bernoulli number}.
$$
 J.~Igusa found generators for the ring of Siegel modular forms of genus 2 \cite{Igu1, Igu2}. One such generator is the Igusa cusp form of weight 10 denoted by $\chi_{10}$, which is a function of
$$
\Omega = \left( \begin{array}{cc} \tau & z \\ z & \sigma \end{array} \right),
$$
which takes values in the (genus 2) Siegel upper half plane $\mathfrak{H}_2$. We set $i:=\sqrt{-1}$ and 
\begin{align*}
p := e^{2 \pi i \sigma}, \ q := e^{2\pi i \tau}, \ y := e^{2\pi i z}.
\end{align*}

\noindent \textbf{Weak Jacobi forms.} We recall that the ring of weak Jacobi forms of even weight and integer index is a polynomial algebra over $\C[G_4,G_6]$ with two generators $\phi_{-2,1}$, $\phi_{0,1}$ of respectively weight $-2$, $0$ and index $1$ \cite{EZ}. The generator $\phi_{-2,1}$ has Fourier expansion
$$
\phi_{-2,1}(q,y) = (y^{\frac{1}{2}} - y^{-\frac{1}{2}})^2 \prod_{n=1}^{\infty} \frac{(1-y q^n)^2(1-y^{-1}q^n)^2}{(1-q^n)^4}.
$$
The elliptic genus of an even $d$-dimensional Calabi-Yau manifold is a weak Jacobi form of weight 0 and index $\frac{d}{2}$ \cite{BL1, KYY, KM}. Specifically for a $K3$ surface
\begin{equation*} 
Ell(K3) = 2 \phi_{0,1}(q,y).
\end{equation*}

\noindent \textbf{Borcherds type lift.} For a meromorphic function $f : \mathfrak{H} \times \C \rightarrow \C$, where $\mathfrak{H}$ denotes the upper half plane, and with Fourier expansion
$$
f(q,y) = \sum_{m \geq 0, n \in \Z} c_{m,n} q^m y^n,
$$
we define its \emph{Borcherds type lift} by
\begin{align}
\begin{split} \label{L_a}
\sfL_a(f) :=& \, \exp\Big(-\sum_{l=1}^{\infty} p^{a l} (f|_{0,1}V_l)(\tau,z)\Big) \\
=& \prod_{l>0,m\geq 0,n \in \Z} (1-p^{al} q^m y^n)^{c_{lm,n}},
\end{split}
\end{align}
where $a \in \Z$ and $V_l$ are the Hecke operators in \cite[Sect.~I.4]{EZ}. We set $\sfL(f):=\sfL_1(f)$. We will also encounter Borcherds type lifts of
$$
f^{\ev}(q,y) := \sum_{m \geq 0, n \in \Z} c_{2m,n} q^{2m} y^{n}.
$$
We use the same definitions when $y$ can have half-integer powers.

A result by V.~Gritsenko and V.~Nikulin \cite{GN} based on the Borcherds lifting procedure \cite{Bor} expresses $\chi_{10}$ as an infinite product
\begin{equation} \label{GN}
\chi_{10}(p,q,y) = p \, \Delta(q) \, \phi_{-2,1}(q,y)  \,  \sfL(Ell(K3)),
\end{equation}
where 
$$
\Delta(q) = q \prod_{n=1}^{\infty} (1-q^n)^{24}
$$ 
is the discriminant modular form.

A celebrated formula from string theory by R.~Dijkgraaf, G.~Moore, E.~Verlinde, and H.~Verlinde \cite{DMVV} expresses the generating function for elliptic genera of Hilbert schemes $K3^{[n]}$ of $n$ points on a $K3$ surface as follows
\begin{equation*} 
\sum_{n=0}^{\infty} Ell(K3^{[n]}) \, p^n = \frac{1}{\sfL(Ell(K3))}.
\end{equation*}
This was proved in mathematics by L.~Borisov and A.~Libgober \cite{BL2, BL3}. In fact, they showed that the formula holds with $K3$ replaced by any smooth projective surface. By \eqref{GN}, this can be expressed as
$$
\sum_{n=0}^{\infty} Ell(K3^{[n]}) \, p^n =  \frac{p \, \Delta(q) \, \phi_{-2,1}(q,y)}{\chi_{10}(p,q,y)}.
$$

\noindent  \textbf{Quasi-Jacobi forms.} As mentioned above, elliptic genera of Calabi-Yau varieties are weak Jacobi forms (we only discussed the even-dimensional case). Libgober \cite{Lib} shows that the Calabi-Yau condition can be dropped as long as we relax the modularity property too. More precisely, elliptic genera of $d$-dimensional complex manifolds span a specific subspace in the ring of so-called \emph{quasi-Jacobi forms}. In our first conjecture, we encounter lifts of quasi- and weak Jacobi forms build from the following Jacobi-Eisenstein series:
\begin{align*}
G_{1,0}(q,y)&:=-\frac{1}{2} \frac{y + 1}{y - 1}+\sum_{n=1}^{\infty} \sum_{d | n} (y^d-y^{-d}) q^n, \\
G_{k,0}(q,y)&:=\Big(y\frac{\partial}{\partial y}\Big)^{k-1}G_{1,0}(q,y).
\end{align*}
Here $G_{1,0}$, $G_{2,0}$ are quasi-Jacobi forms of respectively weight $1$, $2$ and index $0$. Moreover $G_{2,0}-2G_2$, $G_{k,0}$ for $k>2$ are weak Jacobi forms of respectively weight $2$, $k$ and index 0.
These functions can all be expressed in terms of the Weierstrass elliptic function 
$$
\wp(q,y) = \frac{1}{12} + \frac{y}{(1-y)^2} + \sum_{n=1}^{\infty} \sum_{d|n} d (y^d - 2 + y^{-d}) q^n=G_{2,0}(q,y) - 2G_2(q).
$$
By \cite[Thm.~3.6, 9.3]{EZ}, the weak Jacobi form $\phi_{0,1}$ can be written as
\begin{equation*} \label{phi2phi01}
\phi_{0,1}(q,y) = 12 (G_{2,0}(q,y) - 2G_2(q)) \phi_{-2,1}(q,y).
\end{equation*}
In a similar way we define
\begin{align*}
\phi_{0,\frac{k}{2}}(q,y):=\,&G_{k,0}(q,y) \phi_{-2,1}(q,y)^{\frac{k}{2}}, \quad k\ne 2.
\end{align*}
Then $\phi_{0,\frac{1}{2}}$ is a quasi-Jacobi form of weight $0$ and index $\frac{1}{2}$, and the $\phi_{0,\frac{k}{2}}$ for $k\ge 2$  are weak Jacobi forms of weight $0$  and index $\frac{k}{2}$. 

Denote by $\SW(a)$ the Seiberg-Witten invariant of $S$ in class $a \in H^2(S,\Z)$.\footnote{We use Mochizuki's notation \cite{Moc}: $\SW(a)$ stands for $\widetilde{\SW}(2a-K_S)$ with $\widetilde{\SW}(b)$ being the usual Seiberg-Witten invariant in class $b \in H^2(S,\Z)$.} Then $a \in H^2(S,\Z)$ is called a \emph{Seiberg-Witten basic class} when $\SW(a) \neq 0$. Many surfaces have $0$ and $K_S$ as their only Seiberg-Witten basic classes, e.g.~minimal general type surfaces \cite[Thm.~7.4.1]{Mor}. We conjecture the following.\footnote{In Remark \ref{motivate} of Section \ref{conseqsec}, we motivate how we initially found the formula of Conjecture \ref{conjell}.}

\begin{conjecture} \label{conjell}
Let $S$ be a smooth projective surface such that $b_1(S) = 0$, $p_g(S)>0$, $K_S \neq 0$, and the only Seiberg-Witten basic classes of $S$ are $0$ and $K_S$. Let $H,c_1,c_2$ be chosen such that there are no rank 2 strictly Gieseker $H$-semistable sheaves on $S$ with Chern classes $c_1,c_2$. Let $M:=M_{S}^{H}(2,c_1,c_2)$. Then $Ell^{\vir}(M)$ is given by the coefficient of $p^{\vd(M)}$ of 
$$
\psi_S(p,q,y) := 8 \Bigg( \frac{1}{2\sfL_2(\phi_{0,1})} \Bigg)^{\chi(\O_S)}
\Bigg( \frac{2 \sfL_4(2\phi_{0,\frac{1}{2}} \phi_{0,\frac{3}{2}}) \sfL(-2 \phi_{0,\frac{1}{2}})}{\sfL_2\big(-2 \phi_{0,\frac{1}{2}}^{\ev}|_{(q^{\frac{1}{2}},y)}-\phi_{0,\frac{1}{2}}|_{(q^2,y^2)}+2\phi_{0,\frac{1}{2}}^2 \big) } \Bigg)^{K_S^2},
$$
where 
$$
\sfL_2(\phi_{0,1}) = \sfL_2(Ell(K3))^{\frac{1}{2}} = \Bigg( \frac{\chi_{10}(p^2,q,y)}{p^2 \, \Delta(q) \, \phi_{-2,1}(q,y)} \Bigg)^{\frac{1}{2}}. 
$$
\end{conjecture}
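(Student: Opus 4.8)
The plan is to compute the generating function $\sum_{c_2} Ell^{\vir}(M)\, p^{\vd(M)}$ and match it with $\psi_S$, exploiting that the virtual elliptic genus is a cobordism-type invariant. Since $Ell^{\vir}(M)$ is assembled from the terms $\chi^{\vir}(M, \cE(T^{\vir}) \otimes \Omega_M^{p,\vir})$, virtual Hirzebruch--Riemann--Roch \cite{FG} expresses each summand as an integral over $[M]^{\vir}$ of a universal polynomial in the Chern classes of $T^{\vir} = R\pi_{M*}R\hom(\E,\E)_0[1]$. Thus $Ell^{\vir}(M)$ is determined by the virtual cobordism class of $M$, which by Shen's result is captured by descendent Donaldson invariants with insertions built from the universal sheaf $\E$ --- precisely the objects computed in the prequel for virtual $\chi_y$-genera. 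The first step is therefore to rewrite each Fourier coefficient (in $q$ and $y$) of $Ell^{\vir}(M)$ as such a Donaldson invariant.

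Next I would apply Mochizuki's formula \cite{Moc} in rank $2$. This writes each invariant as a sum over splittings $c_1 = a_1 + a_2$ of contributions carrying a factor $\SW(a_1)$ times an explicit residue of a tautological integral over a product $S^{[n_1]} \times S^{[n_2]}$ of Hilbert schemes of points. The hypothesis that the only Seiberg--Witten basic classes are $0$ and $K_S$ collapses the sum to the splittings with $a_i \in \{0, K_S\}$, while the stability assumption on $(H, c_1, c_2)$ guarantees $M$ is projective with no strictly semistable contributions, so the computation reduces to a controlled, finite combination of tautological integrals on products of Hilbert schemes.

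I would then invoke universality for such integrals: after summing over $n_1, n_2$, the resulting generating function factors through finitely many universal power series in $p, q, y$ raised to the basic characteristic numbers of $S$. Because $c_1^2$ enters only through the exponent $\vd(M) = 4c_2 - c_1^2 - 3\chi(\O_S)$ of $p$, the surviving invariant data reduce to $\chi(\O_S)$ and $K_S^2$, producing a product of the shape $8\, A(p,q,y)^{\chi(\O_S)} B(p,q,y)^{K_S^2}$ matching $\psi_S$. The prefactor and the factor $A = (2\sfL_2(\phi_{0,1}))^{-1}$ are pinned down in the $K_S = 0$ limit, where $K3$-type moduli spaces are deformation equivalent to Hilbert schemes and the DMVV formula \cite{DMVV} together with the Gritsenko--Nikulin product \eqref{GN} force the stated expression for $A$. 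The factor $B$ must then be isolated on surfaces with $K_S^2 \neq 0$.

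The main obstacle is the determination and verification of $B$, namely the intricate combination $\sfL_4(2\phi_{0,\frac{1}{2}}\phi_{0,\frac{3}{2}})\,\sfL(-2\phi_{0,\frac{1}{2}})$ divided by the $\sfL_2$ term built from the even part and the scaled arguments of $\phi_{0,\frac{1}{2}}$. Isolating $B$ requires explicit toric evaluation of the Hilbert-scheme integrals on surfaces where $K_S^2$ is nonzero, subtracting off the already-known $A$, and then recognizing the resulting $q,y$-series as the quasi-Jacobi forms built from the Weierstrass function $\wp$; this demands control of the modular structure order by order in $q$. One must also confirm that no further dependence on $c_1 \cdot K_S$ (or the parity of $c_1$) survives beyond what is encoded in $\vd(M)$. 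Since each toric computation verifies only finitely many coefficients, a full proof would require an independent structural identification of $B$ with the stated lift, which is why one expects to confirm the conjecture in a range of cases rather than in complete generality.
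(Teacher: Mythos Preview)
The statement is a \emph{conjecture}; the paper does not prove it but verifies it in many cases via the exact pipeline you outline: virtual HRR and Shen's result to get descendent Donaldson invariants (Proposition~\ref{desc}), Mochizuki's formula to pass to Hilbert schemes (Theorem~\ref{mocthm}), universality to factor through seven characteristic numbers (Proposition~\ref{univ}), and toric localization to compute the universal functions to finite order (Section~\ref{toricsec}). Your closing acknowledgment that this only yields verification in a range, not a proof, matches the paper's stance.

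Two points deserve sharper treatment. First, you assert that ``the surviving invariant data reduce to $\chi(\O_S)$ and $K_S^2$'' essentially by inspection of $\vd(M)$. This is not automatic: Proposition~\ref{univ} produces dependence on the seven numbers $(a_1^2,a_1c_1,c_1^2,a_1K_S,c_1K_S,K_S^2,\chi(\O_S))$, and after summing the two Seiberg--Witten contributions $a_1\in\{0,K_S\}$ one still has a priori dependence on $c_1K_S$ (and $c_1^2$ beyond the shift in the $p$-exponent). That this extra dependence cancels is exactly the content of the paper's separate numerical Conjecture~\ref{numconj}, which is itself only checked to finite order; you mention the $c_1K_S$ issue in passing but should flag it as a genuine conjectural input on the same footing as the identification of $B$. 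Second, Mochizuki's formula carries hypotheses (i)--(iii) of Theorem~\ref{mocthm}; dropping (ii) and (iii) and extending the sum over all $a_1$ is the ``strong form'' (Remark~\ref{strongform}), another conjectural step needed to cover all $(H,c_1,c_2)$ in the statement. With these caveats, your strategy and its limitations coincide with the paper's.
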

 
Next we shift our attention to algebraic cobordism theory \cite{LM, LP}. We consider the algebraic cobordism ring over a point with rational coefficients 
$$
\Omega_* := \bigoplus_{d=0}^{\infty} \Omega_d(\pt) \otimes_\Z \Q.
$$
This is isomorphic to the polynomial ring freely generated by the cobordism classes of $\PP^d$. Moreover, $\Omega_d(\pt) \otimes_\Z \Q$ has a basis
$$
v^I := v_1^{i_1} \cdots v_d^{i_d}, \ \mathrm{where} \ I = (i_1, \ldots, i_d) \in \Z_{\geq 0}^d \ \mathrm{and} \ |I|=\sum k i_k = d
$$
such that the cobordism class $[X]$ of a $d$-dimensional smooth projective variety $X$ is 
$$
[X] = \int_X \prod_{i=1}^{d}\big(1+ \sum_{k=1}^{\infty} x_i^k v_k\big),
$$
where $x_1, \ldots, x_d$ denote the Chern roots of $T_X$. The coefficients of $v^I$ appearing in this expression are symmetric in the Chern roots. From this it follows that the class $[X]$ is uniquely determined by its collection of \emph{Chern numbers}. 

The cobordism class of the Hilbert scheme $S^{[n]}$ of $n$ points on a smooth projective surface $S$ was studied by the first author, G.~Ellingsrud, and M.~Lehn \cite{EGL}. It was shown that there exist two universal functions $F_0, F_1 \in 1+\Q[v_1,v_2,\ldots][\![p]\!]$ such that 
$$
\sum_{n=0}^{\infty} [S^{[n]}] \, p^{n} = F_0^{\chi(\O_S)} F_1^{K_S^2},
$$
for any surface $S$. Therefore $F_0^2$ is the generating series of cobordism classes of $K3^{[n]}$.

When $M$ is a projective scheme with a perfect obstruction theory, J.~Shen \cite{She} constructed its virtual cobordism class 
$$
[M]^{\vir}_{\Omega_*} \in \Omega_{\vd}(M),
$$
where $\vd = \vd(M)$ is the virtual dimension (see also \cite{CFK} and \cite{LS} in the context of dg-manifolds/derived schemes). Denoting projection by $\pi : M \rightarrow \pt$, Shen proved that $\pi_* [M]^{\vir}_{\Omega_*}$ is uniquely determined by its collection of \emph{virtual Chern numbers}. In terms of the basis $v^I$, this can be expressed as follows. Let $T^\vir = [E_0 \rightarrow E_1]$ be a resolution by vector bundles and denote the Chern roots of $E_0$ by $x_1, \ldots, x_n$ and the Chern roots of $E_1$ by $u_1, \ldots, u_m$. Then
$$
\pi_* [M]^{\vir}_{\Omega_*} = \int_{[M]^\vir} \prod_{i=1}^{n}\big(1+ \sum_{k=1}^{\infty} x_i^k v_k\big) \prod_{j=1}^{m} \frac{1}{\big(1+ \sum_{k=1}^{\infty} u_j^k v_k\big)}.
$$
\begin{conjecture} \label{conjcob}
There exists a universal power series $L(p,\bfv) \in 1+\Q[v_1,v_2,\ldots][\![p]\!]$ with the following property. Let $S$ be a smooth projective surface such that $b_1(S) = 0$, $p_g(S)>0$, $K_S \neq 0$, and the only Seiberg-Witten basic classes of $S$ are $0$ and $K_S$. Let $H,c_1,c_2$ be chosen such that there are no rank 2 strictly Gieseker $H$-semistable sheaves on $S$ with Chern classes $c_1,c_2$. Then for $M:=M_{S}^{H}(2,c_1,c_2)$ we have that $\pi_* [M]^{\vir}_{\Omega_*}$ is given by the coefficient of $p^{\vd(M)}$ of 

$$
\psi_S(p,\bfv) := 8 \Bigg( \frac{1}{2} \Big(\sum_{n=0}^{\infty} [K3^{[n]}] \, p^{2n}\Big)^{\frac{1}{2}} \Bigg)^{\chi(\O_S)} \Bigg( 2L(p,\bfv) \Bigg)^{K_S^2}.
$$
\end{conjecture}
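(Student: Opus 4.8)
The plan is to reduce the statement to the evaluation of descendent Donaldson invariants and then to combine Mochizuki's formula with universality. By Shen's theorem recalled above, the pushforward $\pi_*[M]^\vir_{\Omega_*}$ is completely determined by its virtual Chern numbers, i.e.\ by the integrals $\int_{[M]^\vir} P$ as $P$ runs over polynomials in the Chern classes of $T^\vir=R\pi_{M*}R\hom(\E,\E)_0[1]$. Since these Chern classes are tautological, built from the universal sheaf $\E$, every such integral is a descendent Donaldson invariant of $M_S^H(2,c_1,c_2)$. It therefore suffices to prove that, for each fixed monomial in the $v_k$, the generating series in $p$ of the corresponding virtual Chern number (graded by $\vd(M)$) equals the coefficient extracted from $\psi_S(p,\bfv)$; equivalently, that the full generating function of virtual cobordism classes has the multiplicative shape $A(p,\bfv)^{\chi(\O_S)}B(p,\bfv)^{K_S^2}$ for universal series $A,B$, with the stated identification of $A$ and of the numerical prefactors.

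First I would establish universality and the multiplicative structure. Applying Mochizuki's formula to $\int_{[M]^\vir}P$ on a surface whose only Seiberg--Witten basic classes are $0$ and $K_S$ writes this invariant as a sum of just two contributions, indexed by $a=0$ and $a=K_S$, each weighted by $\SW(a)$ and given by a residue in an equivariant parameter of a tautological integral over a product of Hilbert schemes of points $S^{[n_1]}\times S^{[n_2]}$. By the Ellingsrud--G\"ottsche--Lehn universality for tautological integrals over Hilbert schemes, each such integral is a universal polynomial in $\chi(\O_S)$, $K_S^2$, $c_1^2$, and $c_1\cdot K_S$, and, crucially, the generating function over $n_1,n_2$ factors into a $\chi(\O_S)$-power times a $K_S^2$-power, exactly as in $\sum_n[S^{[n]}]p^n=F_0^{\chi(\O_S)}F_1^{K_S^2}$. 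This produces the shape $A^{\chi(\O_S)}B^{K_S^2}$ with $A,B$ independent of $S$; the two-term structure together with $\SW(0)=\SW(K_S)=1$ accounts for the $p\mapsto p^2$ symmetrization, while the numerical factor $8$ and the internal factors of $2$ are fixed by matching the lowest-order term in $p$, where the invariant is computed directly.

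Next I would identify the two universal series. The base $A$ of the $\chi(\O_S)$-exponential is read off from the dependence on $\chi(\O_S)$ alone; the reduction of both the $a=0$ and the $a=K_S$ contributions to rank-one (Hilbert/Quot scheme) computations identifies it, after the $p\mapsto p^2$ symmetrization and normalization, with the Ellingsrud--G\"ottsche--Lehn series $F_0$, whose square is the $K3$ generating function $\sum_n[K3^{[n]}]p^n$. This forces $A=\tfrac12\big(\sum_n[K3^{[n]}]\,p^{2n}\big)^{1/2}$ and explains why the $\chi(\O_S)$-function is governed by Hilbert schemes of points on $K3$ (compare the parallel square-root in $\sfL_2(\phi_{0,1})=\sfL_2(Ell(K3))^{1/2}$ of Conjecture \ref{conjell}). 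With $A$ known, the genuinely new $K_S^2$-function $L(p,\bfv)$ is determined order by order in $p$ by computing finitely many virtual Chern numbers; here the toric strategy of the prequel applies, since the universal polynomials delivered by Mochizuki's formula can be pinned down by evaluating the underlying Hilbert-scheme integrals on toric surfaces via Atiyah--Bott localization. A strong consistency check is that applying the elliptic-genus ring homomorphism on $\Omega$ to $\psi_S(p,\bfv)$ should return $\psi_S(p,q,y)$ of Conjecture \ref{conjell}.

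The main obstacle is twofold. First, rigorously upgrading ``universal polynomial in $\chi(\O_S)$ and $K_S^2$'' to the clean product $A^{\chi(\O_S)}B^{K_S^2}$ requires controlling the interaction of the two Hilbert-scheme factors and the compatibility of the equivariant residue with the Ellingsrud--G\"ottsche--Lehn product decomposition; this factorization is the delicate structural point. Second, and more fundamentally, Mochizuki's formula together with universality determine the answer only once the relevant universal polynomials are known, so in practice the identity can be \emph{verified} to finite order in $p$ by localization rather than proved in closed form. Establishing the exact series $L(p,\bfv)$ for all $p$---and hence the conjecture in full generality---lies beyond what these techniques deliver directly, which is why the statement is phrased as a conjecture.
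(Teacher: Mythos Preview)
Your proposal correctly identifies the overall architecture---Shen's reduction to descendent Donaldson invariants, Mochizuki's formula, and Ellingsrud--G\"ottsche--Lehn universality---but there is a substantive gap at the heart of the argument. The universality result for tautological integrals over $S^{[n_1]}\times S^{[n_2]}$ does \emph{not} give a polynomial only in $\chi(\O_S)$, $K_S^2$, $c_1^2$, $c_1 K_S$: it produces seven universal functions $A_1^\star,\ldots,A_7^\star$ raised to the seven invariants $a_1^2$, $a_1 c_1$, $c_1^2$, $a_1 K_S$, $c_1 K_S$, $K_S^2$, $\chi(\O_S)$ (Proposition~\ref{univ}). The assertion that, after summing the two Seiberg--Witten contributions $a_1=0$ and $a_1=K_S$ and extracting the residue in $s$, the answer depends only on $\chi(\O_S)$ and $K_S^2$ is precisely the \emph{content} of the conjecture, not a consequence of universality or of the multiplicativity argument you invoke. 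The paper makes this explicit by isolating a separate numerical statement (Conjecture~\ref{numconj}) asserting exactly this collapse; it then shows that Conjecture~\ref{numconj} together with the strong form of Mochizuki's formula implies Conjecture~\ref{conjcob}. Your sketch elides this step entirely.

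Two further points. First, $\SW(K_S)=(-1)^{\chi(\O_S)}$, not $1$ in general; this sign is essential to the mechanism by which the two contributions combine. Second, your identification of the $\chi(\O_S)$-series with the $K3$ Hilbert scheme generating function via ``reduction to rank-one computations'' is not an argument: no such direct reduction is available here. The paper arrives at this identification heuristically (Remark~\ref{motivate}) via the fact that moduli of stable sheaves on $K3$ are deformation equivalent to Hilbert schemes when stable equals semistable, and then checks consistency numerically. As you correctly acknowledge at the end, the statement remains a conjecture; the paper's contribution is the precise formulation, the reduction to Conjecture~\ref{numconj}, and extensive numerical verification via toric localization, not a proof.
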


\begin{remark}
Using the virtual Hirzebruch-Riemann-Roch theorem of \cite{CFK, FG}, the elliptic genera $Ell^{\vir}(M)$ in Conjecture \ref{conjell} can be expressed in terms of $q,y$ and virtual Chern numbers of $M$. In particular, Conjecture \ref{conjcob} implies Conjecture \ref{conjell} \emph{except} for the explicit expression for the power series which is raised to the power $K_S^2$. Specializing $Ell^{\vir}(M)$ to $q = 0$ gives the virtual $\chi_y$-genus $\chi_{y}^{\vir}(M)$. We conjectured an explicit formula for these virtual $\chi_{y}$-genera in \cite{GK1}, which is implied by Conjecture \ref{conjell}.\footnote{This follows from a basic calculation using the definitions and the Jacobi triple product identity.} Specializing further to $y=1$ gives the virtual Euler characteristics $e^{\vir}(M)$. The formula for these virtual Euler characteristics coincides with \emph{part of} a formula of C.~Vafa and E.~Witten from the physics literature \cite{VW}. Their formula was one of the main motivations for \cite{GK1}. The \emph{full} Vafa-Witten formula is (conjecturally) explained by Y.~Tanaka and R.~P.~Thomas's recently introduced Vafa-Witten invariants, which contain the virtual Euler characteristics that we computed in \cite{GK1} as well as contributions from ``other components with non-zero Higgs field''. 
\end{remark}

\begin{remark}
In physics language \cite{HIV}, the generating function for $e^{\vir}(M)$ describes a topological twist of $N=4$ supersymmetric Yang-Mills on the 4-manifold $S$. Moreover, $\chi_y^{\vir}(M)$ describes a 5-dimensional theory on $S \times S^1$ compactified along the circle $S^1$, $Ell^{\vir}(M)$ describes a 6-dimensional theory on $S \times T$ compactified along the real torus $T$, and $\pi_*[M]^{\vir}_{\Omega_*}$ appears to describe a new infinite-dimensional version of these theories.
\end{remark}

In this paper we verify Conjectures \ref{conjell} and \ref{conjcob} in a large number of examples by computer calculations (Section \ref{versec}). Our strategy is similar to the one followed in \cite{GK1} for virtual $\chi_y$-genus, which in turn relies on ideas from \cite{GNY1, GNY3}. Specifically:
\begin{itemize}
\item Write $Ell^{\vir}(M)$ and $\pi_* [M]^{\vir}_{\Omega_*}$ in terms of descendent Donaldson invariants of $S$ using a theorem of J.~Shen \cite{She}.
\item Write descendent Donaldson invariants of $S$ in terms of Seiberg-Witten invariants and certain explicit integrals over $S^{[n_1]} \times S^{[n_2]}$ using Mochizuki's formula \cite{Moc}.
\item Use a universality argument to show that the integrals over $S^{[n_1]} \times S^{[n_2]}$ are governed by seven universal functions.
\item Note that these seven universal functions are determined on $S = \PP^2$ and $\PP^1 \times \PP^1$, where we calculate them up to some order. 
\end{itemize}

In Section \ref{conseqsec} we generalize Conjectures \ref{conjell} and \ref{conjcob} in two different directions:
\begin{itemize}
\item Conjecture \ref{numconj} can be seen as a statement purely about intersection numbers on Hilbert schemes of points. Together with a strong version of Mochizuki's formula, it implies Conjectures \ref{conjell} and \ref{conjcob}. It also implies a generalization of Conjectures \ref{conjell} and \ref{conjcob} to arbitrary blow-ups of surfaces $S$ satisfying $b_1(S)=0$, $p_g(S)>0$, $K_S\neq 0$, and the only Seiberg-Witten basic classes of $S$ are $0$ and $K_S$.
\item Conjecture \ref{generalsurfconj} is a generalization of Conjectures \ref{conjell} and \ref{conjcob} to \emph{arbitrary} surfaces $S$ satisfying $b_1(S) = 0$ and $p_g(S)>0$. It implies a blow-up formula. It also implies a formula for surfaces with canonical divisor with irreducible reduced connected components. The latter refines an equation from Vafa-Witten \cite[Eqn.~(5.45)]{VW}. Conjecture \ref{generalsurfconj} itself can be seen as a refinement of (part of) a formula from the physics literature due to Dijkgraaf-Park-Schroers \cite[Eqn.~(6.1), lines 2+3]{DPS}.
\end{itemize}
Conjectures \ref{numconj} and \ref{generalsurfconj} are also checked in examples in Section \ref{versec}. 

The physics approach to the calculation of elliptic genera of instanton moduli spaces was discussed in N.~Nekrasov's PhD thesis \cite{Nek} and the papers \cite{LNS, BLN}. 

Some results in this paper, mostly the consequences discussed in Section \ref{conseqsec}, follow from minor modifications of arguments appearing in \cite{GK1} for the case of virtual $\chi_y$-genus. In order to keep this paper self-contained, we nevertheless reproduce the main idea of the proofs of these results (besides giving a reference to the full argument in \cite{GK1}). \\

\noindent \textbf{Acknowledgements.} 
We thank H.~Nakajima and J.~Shen for useful conversations.

\section{Notation} \label{notsec}

Throughout this paper, we deal with virtual cobordism classes and virtual elliptic genera simultaneously. Therefore we introduce the following notation. Using the functions appearing in Conjectures \ref{conjell} and \ref{conjcob} we define
\begin{align*}
F_0^{\cob}(p,\bfv) &:= \Big( \sum_{n=0}^{\infty} [K3^{[n]}] \, p^{2n} \Big)^{\frac{1}{2}}, \qquad F_1^{\cob}(p,\bfv) := L(p,\bfv), \\
F_0^{\elg}(p,q,y) &:=  \frac{1}{\sfL_2(\phi_{0,1})}, \qquad F_1^{\elg}(p,q,y) :=  \frac{\sfL_4(2\phi_{0,\frac{1}{2}} \phi_{0,\frac{3}{2}}) \sfL(-2 \phi_{0,\frac{1}{2}})}{\sfL_2\big(-2 \phi_{0,\frac{1}{2}}^{\ev}|_{(q^{\frac{1}{2}},y)}-\phi_{0,\frac{1}{2}}|_{(q^2,y^2)}+2\phi_{0,\frac{1}{2}}^2 \big) }.
\end{align*}
We also define
\begin{align*}
u^{\cob} := \bfv, \quad u^{\elg} := (q,y)
\end{align*}
and we view $\bfv = (v_1,v_2,\ldots )$ as formal variables. This notation allows us to consider both cases $F_i^{\star}(p,u^\star)$, $\star \in \{\cob,\elg\}$ simultaneously.

\section{Expression in terms of Donaldson invariants} 

We fix a smooth projective surface $S$ with $b_1(S)=0$ and polarization $H$. Denote by $M:=M_{S}^{H}(r,c_1,c_2)$ the moduli space of rank $r$ Gieseker $H$-stable torsion free sheaves on $S$ with Chern classes $c_1,c_2$. We assume there are no rank $r$ strictly Gieseker $H$-semistable sheaves on $S$ with Chern classes $c_1,c_2$, so $M_{S}^{H}(r,c_1,c_2)$ is projective. We also assume there exists a universal sheaf $\E$ on $M \times S$ (though we get rid of this assumption later in Remark \ref{univexists}). In this section, we use a result of Shen \cite{She} to show that 
$$
\pi_* [M]^{\vir}_{\Omega_*}, \ Ell^{\vir}(M)
$$
can be written in terms of descendent Donaldson invariants.

We first recall descendent insertions. Let $\sigma \in H^*(S,\Q)$ and $\alpha \geq 0$, then 
$$
\tau_{\alpha}(\sigma) := \pi_{M*} \big( \ch_{\alpha+2}(\E) \cap \pi_{S}^{*} \, \sigma \big),
$$
where $\pi_M : M \times S \rightarrow M$ and $\pi_S : M \times S \rightarrow S$ denote projections. We refer to $\tau_{\alpha}(\sigma)$ as a descendent insertion of degree $\alpha$. The insertions $\tau_0(\sigma)$ are called primary insertions. Donaldson invariants with primary insertions feature in the Witten conjecture, which is proved for algebraic surfaces in \cite{GNY3}. In this paper and \cite{GK1} we need higher descendents.

We introduce some further notation. For $X$ a projective $\C$-scheme and $E$ a rank $r$ vector bundle on $X$ with Chern roots $x_1, \dots, x_r$, we define
\begin{equation} \label{defTcob}
\sfT^{\cob}(E,\bfv) := \prod_{i=1}^{r}\big(1+ \sum_{k=1}^{\infty} x_i^k v_k\big),
\end{equation}
where we view $\bfv=(v_1, v_2,\ldots )$ as formal variables. By setting $\sfT^{\cob}(-E,\bfv) = 1/ \sfT^{\cob}(E,\bfv)$, we extend this definition to the entire $K$-group. Furthermore we introduce
\begin{equation} \label{defTelg}
\sfT^{\elg}(E,q,y) := y^{-\frac{r}{2}} \ch(\cE(E)) \, \ch(\Lambda_{-y} E^{\vee}) \, \td(E),
\end{equation}
where $\cE(\cdot)$ was defined in \eqref{cE}. The following multiplicative property plays an important role in Section \ref{univsec} 
\begin{align}
\begin{split} \label{mult}
\sfT^\star(E_1 + E_2,u^\star) &= \sfT^\star(E_1,u^\star) \, \sfT^\star(E_2,u^\star), 
\end{split}
\end{align}
for all $E_1, E_2 \in K^0(X)$ and $\star \in \{\cob,\elg\}$. Here we use the notation of Section \ref{notsec}. For $\star = \cob$ the multiplicative property is obvious and for $\star=\elg$, we use
$$
\cE(E_1 \oplus E_2) = \cE(E_1) \otimes \cE(E_2).
$$
In \cite{GK1} we introduced a similar expression, denoted by $\sfT_y(E,t)$, to deal with virtual $\chi_{y}$-genus and virtual Euler characteristic. The object $\sfT_y(E,t)$ also satisfies \eqref{mult}.

\begin{proposition} \label{desc}
Let $S,H,r,c_1,c_2$ and $M:= M_{S}^{H}(r,c_1,c_2)$ be as above. For both $\star \in \{\cob,\elg\}$, there exists a formal power series expression $P^{\star}(\E,u^\star)$ in variables $u^\star$ whose coefficients are polynomial expressions in certain descendent insertions $\tau_{\alpha}(\sigma)$ satisfying
\begin{align*}
\pi_* [M]^{\vir}_{\Omega_*} = \int_{[M]^{\vir}} P^{\cob}(\E,\bfv), \qquad Ell^{\vir}(M) = \int_{[M]^{\vir}} P^{\elg}(\E,q,y).
\end{align*}
\end{proposition}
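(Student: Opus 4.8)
\noindent The plan is to prove that in both cases the invariant equals $\int_{[M]^{\vir}} \sfT^{\star}(T^{\vir}, u^{\star})$, and then to convert this integrand into a polynomial in descendent insertions by combining Grothendieck--Riemann--Roch with a K\"unneth decomposition. For $\star = \cob$, writing $T^{\vir} = [E_0 \to E_1]$ and comparing Shen's formula recalled above with the definition \eqref{defTcob} identifies the integrand as $\sfT^{\cob}(E_0,\bfv)/\sfT^{\cob}(E_1,\bfv)$, which by the multiplicative property \eqref{mult} equals $\sfT^{\cob}(T^{\vir},\bfv)$; hence $\pi_*[M]^{\vir}_{\Omega_*} = \int_{[M]^{\vir}} \sfT^{\cob}(T^{\vir},\bfv)$. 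For $\star = \elg$ I would apply the virtual Hirzebruch--Riemann--Roch theorem \cite{CFK, FG} to the definition of $Ell^{\vir}(M)$: since $\chi^{\vir}(M,V) = \int_{[M]^{\vir}} \ch(V)\,\td(T^{\vir})$, summing the defining expression \eqref{cE} over $p$ gives $\chi^{\vir}_{-y}(M,\cE(T^{\vir})) = \int_{[M]^{\vir}} \ch(\cE(T^{\vir}))\,\ch(\Lambda_{-y} T^{\vir\vee})\,\td(T^{\vir})$, and the prefactor $y^{-\vd(M)/2} = y^{-\rk T^{\vir}/2}$ matches \eqref{defTelg}, so $Ell^{\vir}(M) = \int_{[M]^{\vir}} \sfT^{\elg}(T^{\vir},q,y)$.

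Next I would linearize using multiplicativity. Since $\sfT^{\star}$ satisfies \eqref{mult} and extends to all of $K^0(M)$, its logarithm is additive, so there are universal coefficients $h_n^{\star}(u^{\star})$, power series in $u^{\star}$, with $\log \sfT^{\star}(E,u^{\star}) = \sum_{n\geq 0} h_n^{\star}(u^{\star})\,\ch_n(E)$. Consequently $\sfT^{\star}(T^{\vir},u^{\star}) = \exp\big(\sum_{n} h_n^{\star}(u^{\star})\,\ch_n(T^{\vir})\big)$ depends on $T^{\vir}$ only through its Chern character.

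The technical heart is then to rewrite $\ch(T^{\vir})$ in terms of descendents. Using $[T^{\vir}] = -[R\pi_{M*}R\hom(\E,\E)_0]$ in $K^0(M)$ and Grothendieck--Riemann--Roch along $\pi_M$ (whose relative tangent complex is $\pi_S^* T_S$), one finds $\ch(T^{\vir}) = -\pi_{M*}\big((\ch(\E^{\vee})\,\ch(\E) - 1)\,\pi_S^*\td(T_S)\big)$. Fixing a homogeneous basis $\{e_j\}$ of $H^*(S,\Q)$ with Poincar\'e-dual basis $\{e_j^{\vee}\}$, K\"unneth gives $\ch_{\alpha}(\E) = \sum_j \pi_M^*\tau_{\alpha-2}(e_j^{\vee})\cdot\pi_S^* e_j$, so that every pushforward of a product $\ch_a(\E)\ch_b(\E)\cdot\pi_S^*\sigma$ becomes the polynomial $\sum_{j,k}\tau_{a-2}(e_j^{\vee})\tau_{b-2}(e_k^{\vee})\int_S e_j e_k \sigma$ in descendent insertions. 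Substituting this into the exponential of the previous step and collecting the coefficient of each monomial in $u^{\star}$ then defines $P^{\star}(\E,u^{\star})$ with the asserted properties.

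\textbf{The main obstacle} I expect is organizational rather than conceptual: one must carefully track the cross terms of $\ch(\E^{\vee})\ch(\E)$ and the Todd contribution $\pi_S^*\td(T_S)$ through the K\"unneth decomposition, and verify that at each fixed order in $u^{\star}$ only finitely many Chern-character components $\ch_n(T^{\vir})$ contribute, so that the coefficients are genuine polynomials in descendents rather than infinite series. All the conceptual inputs---virtual Hirzebruch--Riemann--Roch, Shen's theorem, and the multiplicativity \eqref{mult}---are already available, so no essentially new idea is needed.
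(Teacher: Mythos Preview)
Your proposal is correct and follows essentially the same route as the paper. The only difference is one of packaging: the paper first observes via virtual Hirzebruch--Riemann--Roch that $Ell^{\vir}(M)$ is an integral of a universal expression in the Chern classes of $T^{\vir}$, then notes that this reduces the elliptic case to the cobordism case (since $\pi_*[M]^{\vir}_{\Omega_*}$ encodes \emph{all} virtual Chern numbers), and finally cites Shen's theorem \cite[Thm.~0.2]{She} for the existence of $P^{\cob}$. You instead treat both cases in parallel and unpack the content of Shen's argument yourself---multiplicativity of $\sfT^{\star}$ to linearize in $\ch_n(T^{\vir})$, then GRR along $\pi_M$ and K\"unneth on $M\times S$ to convert $\ch(T^{\vir})$ into polynomials in descendents. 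Your version is more self-contained; the paper's is shorter because it outsources this step. The finiteness concern you flag is harmless: after integrating over $[M]^{\vir}$ only the cohomological degree $\vd(M)$ part survives, so each coefficient in $u^{\star}$ is a finite polynomial in descendents.
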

\begin{proof}
By the virtual Hirzebruch-Riemann-Roch formula \cite{CFK, FG}
$$
Ell^{\vir}(M) = y^{-\frac{\vd(M)}{2}} \int_{[M]^{\vir}} \ch(\cE(T^\vir)) \, \ch(\Lambda_{-y}T^{\vir \vee}) \, \td(T^\vir).
$$
We can expand the integrand in $q^n y^m$ and write each coefficient as a polynomial expression in $c_i(T{^\vir})$. Since $\pi_* [M]^{\vir}_{\Omega_*}$ involves \emph{all} virtual Chern numbers, the existence of $P^{\elg}(\E,q,y)$ follows from the existence of $P^{\cob}(\E,\bfv)$. The existence of the polynomial $P^{\cob}(\E,\bfv)$ was proved by Shen \cite[Thm.~0.2]{She}.\footnote{Shen works with Pandharipande-Thomas invariants on threefolds. The result \cite[Thm.~0.2]{She} holds in our setting by replacing $-R \pi_{P*} R\hom(\mathbb{I}^{\mdot},\mathbb{I}^{\mdot})_0$ by $-R \pi_{M*} R\hom(\E,\E)_0$.}
\end{proof}

\section{Expression in terms of Hilbert schemes} \label{exprinHilb}

We consider $M_{S}^{H}(2,c_1,c_2)$ as in the previous section. Proposition \ref{desc} expresses virtual cobordism class and virtual elliptic genus in terms of descendent Donaldson invariants. We now recall Mochizuki's formula \cite[Thm.~1.4.6]{Moc}, which allows us to write descendent Donaldson invariants in terms of integrals over Hilbert schemes of points on $S$ and Seiberg-Witten invariants of $S$. See Section \ref{intro} for our conventions on Seiberg-Witten invariants.

We denote the Hilbert scheme of $n$ points on $S$ by $S^{[n]}$. On $S^{[n_1]} \times S^{[n_2]} \times S$, we have pull-backs of the universal ideal sheaves $\mathcal{I}_1$, $\mathcal{I}_2$ from $S^{[n_1]} \times S$, $S^{[n_2]} \times S$, which we denote by the same symbol. Moreover, for any $L \in \Pic(S)$ and $i=1,2$, we consider the tautological vector bundles
$$
L^{[n_i]} := p_* q^* L,
$$
with $p : \cZ_i \rightarrow S^{[n_i]}$, $q : \cZ_i \rightarrow S$ projections from the universal subscheme $\cZ_i \subset S^{[n_i]} \times S$.

We endow $S^{[n_1]} \times S^{[n_2]}$ with a trivial $\C^*$ action. We denote the generator of the character group of $\C^*$ by $\s$. Correspondingly, we write $s$ for the generator of 
$$
H^*(B\C^*,\Q) = H^*_{\C^*}(\pt,\Q) \cong \Q[s].
$$

\begin{remark} \label{masterC}
Roughly speaking, Mochizuki derives his formula \cite[Thm.~1.4.6]{Moc} from a virtual wall-crossing formula on a master space $\mathbb{M}$. This master space comes equipped with a $\C^*$ action and the Hilbert schemes $S^{[n_1]} \times S^{[n_2]}$  arise as components of the $\C^*$-fixed locus of $\mathbb{M}$. Although this master space itself plays no role in the formulation of Mochzuki's formula, Theorem \ref{mocthm} below, the trivial $\C^*$ action on $S^{[n_1]} \times S^{[n_2]}$ features prominently. We denote it by $\C^* = \C^*_{\mathbb{M}}$ when we want to stress its origin.
\end{remark}

Let $P(\E)$ be any polynomial in descendent insertions $\tau_{\alpha}(\sigma)$, which arises from a polynomial in Chern numbers of $T^\vir$ (e.g.~such as in Proposition \ref{desc}). Denote the group of Weil divisors on $S$ modulo linear equivalence by $A^1(S)$. For any $a_1, a_2 \in A^1(S)$ and $n_1, n_2 \in \Z_{\geq 0}$, we define (after Mochizuki) 
\begin{equation} \label{Psi}
\Psi(a_1,a_2,n_1,n_2) := \Coeff_{s^0} \Bigg( \frac{P(\I_1(a_1) \otimes \s^{-1} \oplus \I_2(a_2) \otimes \s)}{Q(\I_1(a_1) \otimes \s^{-1}, \I_2(a_2) \otimes \s)} \frac{\Eu(\O(a_1)^{[n_1]}) \, \Eu(\O(a_2)^{[n_2]} \otimes \s^2)}{(2s)^{n_1+n_2 - \chi(\O_S)}} \Bigg).
\end{equation}
We explain the notation appearing in this formula. Here $\I_i(a_i)$ is short-hand for $\I_i \otimes \pi_S^* \O(a_i)$, which are considered as sheaves on $S^{[n_1]} \times S^{[n_2]} \times S$. Similarly $\O(a_i)^{[n_i]}$ are considered as vector bundles on $S^{[n_1]} \times S^{[n_2]}$ pulled back from its factors. The scheme $S^{[n_1]} \times S^{[n_2]}$ has trivial $\C^*_{\mathbb{M}} = \C^*$ action, and we consider $\O(a_i)^{[n_i]}$ endowed with the \emph{trivial} $\C^*$-equivariant structure. Furthermore
$$
\O(a_2)^{[n_2]} \otimes \s^2
$$
denotes $\O(a_2)^{[n_2]}$ with $\C^*$-equivariant structure given by tensoring with character $\s^2$. Similarly, we endow $S^{[n_1]} \times S^{[n_2]} \times S$ with trivial $\C^*$ action, endow $\I_i(a_i)$ with trivial $\C^*$-equivariant structure, and denote by
$$
\I_1(a_1) \otimes \s, \qquad \I_2(a_2) \otimes \s^{-1}
$$
the $\C^*$-equivariant sheaves obtained by tensoring with the characters $\s$ and $\s^{-1}$ respectively. Next, $\Eu(\cdot)$ denotes $\C^*$-equivariant Euler class (which is the ordinary Euler class in $\Eu(\O(a_1)^{[n_1]}$). Furthermore, $P(\cdot)$ is the expression obtained from $P(\E)$ by formally replacing $\E$ by $\cdot$. For any $\C^*$-equivariant sheaves $E_1$, $E_2$ on $S^{[n_1]} \times S^{[n_2]} \times S$ flat over $S^{[n_1]} \times S^{[n_2]}$
$$
Q(E_1,E_2) :=\Eu(- R\pi_* R\hom(E_1,E_2) -  R\pi_*  R\hom(E_2,E_1)), 
$$
where $\pi : S^{[n_1]} \times S^{[n_2]} \times S \rightarrow S^{[n_1]} \times S^{[n_2]}$ denotes projection. All pull-backs and push-forwards in $P(\cdot), Q(\cdot,\cdot)$ are $\C^*$-equivariant with respect to the trivial $\C^*$ actions on $S^{[n_1]} \times S^{[n_2]} \times S$, $S^{[n_1]} \times S^{[n_2]}$, and $S$, and the only non-trivial equivariant structures come from the characters $\s^{\pm 1}$. Finally $\Coeff_{s^0}(\cdot)$ takes the coefficient of $s^0$.\footnote{This differs from Mochizuki who uses $p_g(S)$ instead of $\chi(\O_S)$ and takes a residue. Our definition differs by a factor $2$ from Mochizuki's. Mochizuki works on the moduli stack of oriented sheaves which maps to $M$ via a degree $\frac{1}{2}:1$ \'etale morphism, which accounts for the difference.} We define 
$$
\widetilde{\Psi}(a_1,a_2,n_1,n_2,s)
$$
by expression \eqref{Psi} but \emph{without} $\Coeff_{s^0}(\cdot)$. Let $c_1,c_2$ be a choice of Chern classes. For any decomposition $c_1 = a_1 + a_2$, Mochizuki defines
\begin{equation} \label{cA}
\cA(a_1,a_2,c_2) := \sum_{n_1 + n_2 = c_2 - a_1 a_2} \int_{S^{[n_1]} \times S^{[n_2]}} \Psi(a_1,a_2,n_1,n_2).
\end{equation}
We denote by $\widetilde{\cA}(a_1,a_2,c_2,s)$ the same expression with $\Psi$ replaced by $\widetilde{\Psi}$. \\

With these ingredients, Mochizuki's formula is as follows \cite[Thm.~1.4.6]{Moc}:
\begin{theorem}[Mochizuki] \label{mocthm}
Let $S$ be a smooth projective surface with $b_1(S) = 0$ and $p_g(S) >0$. Let $H, c_1,c_2$ be chosen such that there exist no rank 2 strictly Gieseker $H$-semistable sheaves on $S$ with Chern classes $c_1,c_2$. Suppose the universal sheaf $\E$ exists on $M_{S}^{H}(2,c_1,c_2) \times S$. Suppose the following conditions hold:
\begin{enumerate}
\item[(i)] $\chi(\ch) > 0$, where $\chi(\ch) := \int_S \ch \cdot \td(S)$ and $\ch = (2,c_1,\frac{1}{2}c_1^2 - c_2)$.
\item[(ii)] $p_{\ch} > p_{K_S}$, where $p_{\ch}$ and $p_{K_S}$ are the reduced Hilbert polynomials of $\ch$ and $K_S$.
\item[(iii)] For all SW basic classes $a_1$ with $a_1 H \leq (c_1 -a_1) H$ the inequality is strict. 
\end{enumerate}
Let $P(\E)$ be any polynomial in descendent insertions, which arises from a polynomial in Chern numbers of $T^\vir$ (e.g.~such as in Prop.~\ref{desc}). Then
\begin{equation} \label{mocform}
\int_{[M_{S}^{H}(2,c_1,c_2)]^{\vir}} P(\E) = -2^{1-\chi(\ch)} \sum_{{\scriptsize{\begin{array}{c} c_1 = a_1 + a_2 \\ a_1 H < a_2 H \end{array}}}} \SW(a_1) \, \cA(a_1,a_2,c_2).
\end{equation}
\end{theorem}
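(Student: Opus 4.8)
The plan is to reconstruct the formula as the wall-crossing contribution extracted from Mochizuki's master space via virtual localization; this is the architecture of \cite{Moc}, and I would import its geometric input wholesale rather than reprove it from scratch. The first step is to enlarge the problem to a moduli space of pairs (a sheaf together with an ``orientation''/section datum, in Mochizuki's terminology) depending on a real stability parameter. As the parameter crosses a wall the pair-moduli changes, and the master space $\mathbb{M}$ interpolates the two sides: it is a proper Deligne--Mumford stack carrying a $\C^*$-equivariant perfect obstruction theory whose $\C^*$-fixed locus consists of (a) the moduli space $M = M_S^H(2,c_1,c_2)$ of stable sheaves, and (b) components isomorphic to products $S^{[n_1]} \times S^{[n_2]}$. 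The latter parametrize the reducible sheaves $\I_{Z_1}(a_1) \oplus \I_{Z_2}(a_2)$ with $c_1 = a_1 + a_2$ and $c_2 = a_1 a_2 + n_1 + n_2$, which is the origin of the constraint $n_1 + n_2 = c_2 - a_1 a_2$ in \eqref{cA}, and the character $\s$ records the weights by which $\C^*$ scales the two summands.

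Next I would apply the virtual localization theorem on $\mathbb{M}$ to the class obtained by pulling back $P$. Because $\mathbb{M}$ is proper, the corresponding equivariant integral is regular in the equivariant parameter $s$, so the localized contributions of the fixed components must sum to something regular; extracting the relevant coefficient (this is what the $\Coeff_{s^0}(\cdot)$ normalization encodes) produces a single relation. Solving that relation for the contribution of the component $M$ expresses the left-hand side of \eqref{mocform} as minus the sum of the contributions of the Hilbert-scheme components.

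The third step is to identify each Hilbert-scheme contribution with the summand $\cA(a_1,a_2,c_2)$. Restricting the virtual structure to $S^{[n_1]} \times S^{[n_2]}$ splits $R\pi_* R\hom(\E,\E)_0$ into a fixed part, which accounts for the descendent insertions evaluated at $\I_1(a_1) \otimes \s^{-1} \oplus \I_2(a_2) \otimes \s$, and a moving part, whose equivariant Euler class is exactly the denominator $Q(\I_1(a_1)\otimes\s^{-1},\I_2(a_2)\otimes\s)$ built from the cross-$\Ext$ groups between the two summands. The tautological factors $\Eu(\O(a_1)^{[n_1]})$ and $\Eu(\O(a_2)^{[n_2]}\otimes\s^2)$ arise from the deformations of the orientation/section data along the Hilbert-scheme directions, and the power of $2s$ together with the shift by $\chi(\O_S)$ records the remaining normal weights. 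Finally, the weight $\SW(a_1)$ and the overall factor $-2^{1-\chi(\ch)}$ come from reducing the rank-one moduli of sheaves with fixed determinant $\O(a_1)$ to Seiberg-Witten theory, using $p_g(S)>0$ and $b_1(S)=0$; hypotheses (i)--(iii) guarantee that these are the only walls that contribute and that all relevant inequalities are strict, so no semistable or boundary strata spoil the localization.

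The hard part is genuinely the geometric input being imported: constructing $\mathbb{M}$ with its perfect obstruction theory and proving that its $\C^*$-fixed locus is \emph{exactly} $M$ together with the products of Hilbert schemes, with the virtual normal bundle computed as above. Establishing the regularity/vanishing of the total $s$-contribution (properness plus a degree count) and the reduction of the fixed-determinant rank-one contribution to $\SW(a_1)$ are the two technical pillars. Both are precisely the content of \cite{Moc}, so in this paper the statement is used as a black box.
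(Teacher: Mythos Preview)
Your proposal is correct and matches the paper's treatment: the theorem is not proved here but quoted from \cite[Thm.~1.4.6]{Moc}, and the master-space/virtual-localization architecture you sketch is exactly what the paper alludes to in Remark~\ref{masterC}. Your outline of how the Hilbert-scheme fixed loci, the integrand $\Psi$, and the Seiberg--Witten reduction arise is accurate, and your closing remark that the statement functions as a black box is precisely how the paper uses it.
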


\begin{remark} \label{univexists}
The assumption that $\E$ exists on $M \times S$, where $M:=M_{S}^{H}(2,c_1,c_2)$ is not needed. As remarked in the introduction, $T^{\vir} = -R\pi_{*}R\hom(\E,\E)_0$ always exists globally so the left-hand side of Mochizuki's formula always makes sense. Also, Mochizuki \cite{Moc} works over the Deligne-Mumford stack of oriented sheaves, which always has a universal sheaf. This can be used to show that global existence of $\E$ on $M \times S$ can be dropped from the assumptions. In fact, when working on the stack, $P$ can be \emph{any} polynomial in descendent insertions defined using the universal sheaf on the stack.
\end{remark}

\begin{remark} \label{assumpmocthm}
The first author, Nakajima, and K.~Yoshioka conjecture in \cite{GNY3} that assumptions (ii) and (iii) can be dropped from Theorem \ref{mocthm}. They also conjecture that the sum in Mochizuki's formula can be replaced by a sum over \emph{all} classes $a_1 \in H^2(S,\Z)$. Assumption (i) is necessary as we see from our calculations in Section \ref{versec}. 
\end{remark}

From Proposition \ref{desc} and Theorem \ref{mocthm} we deduce that
$$
\pi_* [M]^{\vir}_{\Omega_*}, \ Ell^{\vir}(M_{S}^{H}(2,c_1,c_2))
$$
are given by equation \eqref{mocform} by taking respectively $\star = \cob, \elg$ and
\begin{equation} \label{choiceP}
P(\E) = P^{\star}(\E,u^\star) = \sfT^\star(-R \pi_* R\hom(\E,\E)_0,u^\star) ,
\end{equation}
where $\E$ is replaced by
$$
\I_1(a_1) \otimes \s^{-1} \oplus \I_2(a_2) \otimes \s.
$$
We note that the rank of
\begin{equation}  \label{Erepl}
-R \pi_* R\hom(\I_1(a_1) \otimes \s^{-1} \oplus \I_2(a_2) \otimes \s,\I_1(a_1) \otimes \s^{-1} \oplus \I_2(a_2) \otimes \s)_0
\end{equation} 
equals the rank of $T^{\vir}$.

Recall that $S^{[n_1]} \times S^{[n_2]}$ has trivial $\C_{\mathbb{M}}^* = \C^*$ action and for any $\C^*$-equivariant complex $E$ on $S^{[n_1]} \times S^{[n_2]}$, such as the complex given by \eqref{Erepl}, we denote by $\sfT^\star_{\C^*}(E,u^\star)$ the $\C^*$-equivariant analog of \eqref{defTcob} and \eqref{defTelg}. For $\star = \elg$, this means replacing $\ch$, $\td$ by $\ch^{\C^*}$, $\td^{\C^*}$ in \eqref{defTelg}. For $\star = \cob$, this means replacing Chern roots by $\C^*$-equivariant Chern roots in \eqref{defTcob}. In particular, we do not need to work with a $\C^*$-equivariant cobordism ring and we regard $v_k$ as \emph{formal variables}. Only in the formula of Conjecture \ref{conjcob} these formal variables become the generators of the (non-equivariant) algebraic cobordism ring $\Omega_*$ as explained in the introduction.

\section{Expression in terms of universal functions} \label{univsec}

Let $S$ be any smooth projective surface. We are interested in expression \eqref{cA} with $P(\E)$ given by \eqref{choiceP}. It is convenient to work with generating functions starting with constant term 1. We first address this normalization, for which we need the following result of Borisov-Libgober \cite[Prop.~3.1]{BL1}. For a vector bundle $E$ on $S$ with Chern roots $x_1, \ldots, x_r$, the following equality holds 
$$
\sfT^{\elg}(E,q,y) = \prod_{i=1}^{r} x_i \frac{\theta_1(q,e^{x_i}y^{-1})}{\theta_1(q,e^{x_i})},
$$
where $\theta_1(q,y)$ denotes the following Jacobi theta function
\begin{align*}
\theta_1(q,y) :=\,& \sum_{n \in \Z} (-1)^n q^{\frac{1}{2}\big(n+\frac{1}{2}\big)^2} y^{n+\frac{1}{2}}\\
=&\,q^{\frac{1}{8}}(y^{\frac{1}{2}}-y^{-\frac{1}{2}})\prod_{n=1}^\infty(1-q^n) (1-yq^n)(1-y^{-1}q^n).
\end{align*}
\begin{definition} \label{defZ}
Define the following functions
\begin{align*}
f^{\cob}(s,\bfv) :=&\, \sfT^{\cob}_{\C^*}(\s^2,\bfv) \\
=&\,  1+\sum_{k=1}^{\infty} (2s)^k v_k, \\
f^{\elg}(s,q,y) :=&\, \sfT^{\elg}_{\C^*}(\s^2,q,y)  \\
=&\, y^{-\frac{1}{2}} \ch^{\C^*}(\cE(\mathfrak{s}^2)) \, \ch^{\C^*}(\Lambda_{-y} \mathfrak{s}^{-2}) \, \td^{\C^*}(\mathfrak{s}^2) \\
=&\, 2s \frac{\theta_1(q,e^{2s}y^{-1})}{\theta_1(q,e^{2s})},
\end{align*}
where $\sfT^{\star}_{\C^*}(\cdot,u^{\star})$ were introduced at the end of the previous section. For any $a \in A^1(S)$ we abbreviate $\chi(a) := \chi(\O(a))$. For any $\star \in \{\cob,\elg\}$, $a_1, c_1 \in A^1(S)$, we define 
\begin{align*} 
\sfZ_{S}^{\star}(a_1,c_1,s,p,u^\star) := \, &(2s)^{-\chi(\O_S)} \Bigg( \frac{2s}{f^\star(s,u^\star)} \Bigg)^{-\chi(c_1-2a_1)} \Bigg( \frac{-2s}{f^\star(-s,u^\star)} \Bigg)^{-\chi(2a_1-c_1)} \\
&\times \sum_{n_1, n_2 \geq 0} p^{n_1+n_2} \int_{S^{[n_1]} \times S^{[n_2]}} \widetilde{\Psi}(a_1,c_1-a_1,n_1,n_2), 
\end{align*}
where $\widetilde{\Psi}$ was defined in the previous section and we take $P(\E) = P^{\star}(\E,u^\star)$ as in \eqref{choiceP}. The coefficient in front has been chosen such that
\begin{align*}
\sfZ_{S}^{\elg}(a_1,c_1,s,p,y,q) &\in 1 + p \, \Q[y^{\pm \frac{1}{2}}](\!(s)\!)[[p,q]], \\
\sfZ_{S}^{\cob}(a_1,c_1,s,p,\bfv) &\in 1 + p \, \Q[\bfv](\!(s)\!)[[p]]. 
\end{align*}
\end{definition}

\begin{proposition} \label{univ}
There exist universal functions 
\begin{align*}
A_1^{\elg}(s,p,y,q), \ldots, A_7^{\elg}(s,p,y,q) &\in 1 + p \, \Q[y^{\pm \frac{1}{2}}](\!(s)\!)[[p,q]], \\
A_1^{\cob}(s,p,\bfv), \ldots, A_7^{\cob}(s,p,\bfv) &\in 1 + p \, \Q[\bfv](\!(s)\!)[[p]],
\end{align*} 
such that for any smooth projective surface $S$, $\star \in \{\cob,\elg\}$, and $a_1, c_1 \in A^1(S)$ we have
$$
\sfZ_{S}^{\star}(a_1,c_1,p,u^\star) = \big(A_1^{\star}\big)^{a_1^2} \, \big(A_2^{\star}\big)^{a_1 c_1} \, \big(A_3^{\star}\big)^{c_1^2} \, \big(A_4^{\star}\big)^{a_1 K_S} \, \big(A_5^{\star}\big)^{c_1 K_S} \, \big(A_6^{\star}\big)^{K_S^2} \, \big(A_7^{\star}\big)^{\chi(\O_S)}.
$$
\end{proposition}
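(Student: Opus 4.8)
The plan is to follow the Ellingsrud--G\"ottsche--Lehn universality strategy \cite{EGL}, exactly as adapted to this moduli-theoretic setting in \cite{GK1}. The argument splits into two parts: first, showing that each coefficient of $\sfZ_S^\star$, viewed as a power series in $p$, is a \emph{universal polynomial} in the seven intersection numbers $a_1^2, a_1 c_1, c_1^2, a_1 K_S, c_1 K_S, K_S^2, \chi(\O_S)$; and second, upgrading this to the multiplicative form through an additivity-under-disjoint-union argument.

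First I would analyze the integrand $\widetilde\Psi(a_1, c_1 - a_1, n_1, n_2)$. After substituting $\E \mapsto \I_1(a_1)\otimes\s^{-1}\oplus\I_2(a_2)\otimes\s$ with $a_2 = c_1 - a_1$, every factor in \eqref{Psi} becomes a class on $S^{[n_1]}\times S^{[n_2]}$ built from \emph{tautological} data: by Grothendieck--Riemann--Roch the descendent insertions $\tau_\alpha(\sigma)$ are polynomial in Chern classes of $R\pi_*$ of (pullbacks of) $\I_1,\I_2$ twisted by $\O(a_i)$; the factor $Q(\cdot,\cdot)$ is an equivariant Euler class of a $K$-theory class assembled from the $R\hom(\I_i(a_i),\I_j(a_j))$; and $\Eu(\O(a_1)^{[n_1]})$, $\Eu(\O(a_2)^{[n_2]}\otimes\s^2)$ are Euler classes of the tautological bundles $\O(a_i)^{[n_i]}$. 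The nontrivial $\C^*$-weights enter only through powers of $\s^{\pm1}$, which become powers of $s$; since the subcomplex carrying a nontrivial character contributes a leading factor of $2s$ to $Q$, the quotient $\widetilde\Psi$ is well-defined in the ring of Laurent series in $s$, with coefficients that are honest tautological classes. Working coefficient-by-coefficient in $s$ and in the formal variables $u^\star$, the Ellingsrud--G\"ottsche--Lehn theorem, in the form admitting tautological sheaves built from line bundles, the universal subscheme, and $T_{S^{[n_i]}}$, and extended to products of Hilbert schemes as in \cite{GK1}, shows that each $\int_{S^{[n_1]}\times S^{[n_2]}}\widetilde\Psi$ is a universal polynomial in $a_1^2, a_1a_2, a_2^2, a_1K_S, a_2K_S, K_S^2, c_2(S)$. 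Substituting $a_2 = c_1 - a_1$ and trading $c_2(S)$ for $\chi(\O_S)$ via Noether's formula gives a universal polynomial in exactly the seven listed quantities. The normalizing prefactor contributes nothing new: by Riemann--Roch each Euler characteristic $\chi(c_1-2a_1)$, $\chi(2a_1-c_1)$ is itself a $\Q$-linear combination of the monomials $a_1^2,a_1c_1,c_1^2,a_1K_S,c_1K_S$ and $\chi(\O_S)$.

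To obtain the product form I would invoke multiplicativity under disjoint unions. For $S = S'\sqcup S''$ with line bundles restricting to $(a_1',c_1')$ and $(a_1'',c_1'')$, the four-fold product $S^{[n_1]}\times S^{[n_2]}$ decomposes into a disjoint union of products over the two factors, the universal ideal sheaves and tautological bundles split as exterior direct sums, and $R\hom$ between sheaves with disjoint support vanishes, so $-R\pi_*R\hom(\E,\E)_0$ decomposes in $K$-theory into an $S'$-part and an $S''$-part up to a trivial summand. Consequently all factors are multiplicative over this decomposition, where the multiplicative property \eqref{mult} of $\sfT^\star$ is essential and the trivial-summand correction is absorbed by the $\chi(\O_S)$-dependent normalization; hence $\sfZ_{S'\sqcup S''}^\star = \sfZ_{S'}^\star\,\sfZ_{S''}^\star$. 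Since the seven intersection numbers are additive under disjoint union, taking $\log$, legitimate because $\sfZ_S^\star\in 1+p(\cdots)$, shows each $p$-coefficient of $\log\sfZ_S^\star$ is a polynomial $P$ in the seven variables with $P(I'+I'') = P(I')+P(I'')$. As intersection-number vectors of surfaces-with-line-bundles are Zariski dense in $7$-dimensional affine space, this functional equation forces $P$ to be linear with vanishing constant term, so $\log\sfZ_S^\star = \sum_{i=1}^{7} I_i\,\log A_i^\star$ for universal series $A_i^\star$ independent of $S$, which is the claimed formula; the stated ranges follow from the normalization in Definition \ref{defZ}.

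The main obstacle is the first part: verifying rigorously that the \emph{entire} integrand --- in particular the Euler-class denominator $Q$, a class of possibly negative virtual rank inverted as a Laurent series in $s$, together with the descendents $\tau_\alpha(\sigma)$ coming from $\ch(\E)$ --- assembles into a tautological expression of the precise shape to which the EGL machinery applies, with all $\C^*$-weights correctly bookkept into $s$. Once the integrand is certified tautological and multiplicative over disjoint unions, the universality and the passage to the exponential form are formal; this is where the bulk of the routine but lengthy verification, parallel to \cite{GK1}, is concentrated.
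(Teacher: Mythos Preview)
Your proposal is correct and follows essentially the same approach as the paper: invoke the Ellingsrud--G\"ottsche--Lehn universality (applied to $S^{[n_1]}\times S^{[n_2]}$ via the disjoint-union trick) to get universal polynomial dependence on the seven numbers, then use multiplicativity of $\sfT^\star$ under disjoint unions to upgrade to the exponential form. The only cosmetic difference is that you deduce linearity of $\log\sfZ_S^\star$ from a Zariski-density argument, whereas the paper picks seven explicit triples $(S^{(i)},a_1^{(i)},c_1^{(i)})$ with $\Q$-independent Chern-number vectors and refers to \cite[Lem.~5.5]{GNY1} for the construction of the $A_i^\star$; these are equivalent.
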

\begin{proof}
Let $\sfT^{\star}_{\C^*}(\cdot,u^\star)$ be the $\C^*$-equivariant version of \eqref{defTcob} and \eqref{defTelg} defined at the end of Section \ref{exprinHilb}. The proof of this proposition is almost verbatim the same as \cite[Prop.~3.3]{GK1}, which in turn makes use of \cite[Lem~5.5]{GNY1}. The main ideas are as follows: \\

\noindent \textbf{Step 1: Universal dependence.} By \cite{EGL}, for any polynomial expression $X$ in Chern classes of
$$
T_{S^{[n]}}, \qquad \O(a_1)^{[n]}, \qquad \O(c_1)^{[n]},
$$
there exists a polynomial $Y$ in the Chern numbers $(a_1^2,a_1 c_1,c_1^2,a_1 K_S, c_1 K_S, K_S^2, \chi(\O_S))$ such that
$$
\int_{S^{[n]}} X = Y,
$$ 
where $Y$ only depends on $X$. The integrals appearing in Mochizuki's formula (Theorem \ref{mocthm} with $P(\E)$ given by \eqref{choiceP}) are over $S^{[n_1]} \times S^{[n_2]}$. Defining $S_2 := S \sqcup S$ (disjoint union), we have
$$
S_2^{[n]} = \bigsqcup_{n_1+n_2 = n} S^{[n_1]} \times S^{[n_2]}
$$
and the integrals appearing in Mochizuki's formula can be expressed as $\int_{S^{[n]}_2} X$, to which \cite{EGL} applies (as shown in \cite[Sect.~5]{GNY1}). In particular, there exist
$$
G^{\elg} \in \Q[y^{\pm \frac{1}{2}},x_1, \ldots, x_7](\!(s)\!)[[p,q]], \qquad G^{\cob} \in \Q[\bfv,x_1, \ldots, x_7](\!(s)\!)[[p]]
$$
such that for all $S,a_1,c_1$ and $\star = \elg, \cob$, we have
$$
\sfZ_{S}^{\star}(a_1,c_1,p,u^\star) = \exp G^{\star}(a_1^2,a_1 c_1,c_1^2,a_1 K_S, c_1 K_S, K_S^2, \chi(\O_S)).
$$

\noindent \textbf{Step 2: Multiplicativity} Let $S',S''$ be any smooth projective surfaces, which are \emph{not necessarily connected}, let $S = S' \sqcup S''$, and let $a_1,c_1 \in A^1(S)$. Denote
$$
a_1' = a_1|_{S'}, \qquad a_1'' = a_1|_{S''}, \qquad c_1' = c_1|_{S'}, \qquad c_1'' = c_1|_{S''}.
$$
We claim that 
\begin{equation*}
\sfZ_{S}^{\star}(a_1,c_1,p,u^\star) = \sfZ_{S'}^{\star}(a_1',c_1',p,u^\star) \, \sfZ_{S''}^{\star}(a_1'',c_1'',p,u^\star). 
\end{equation*}
This follows from the multiplicative properties \eqref{defTcob} and \eqref{defTelg} of $\sfT_{\C^*}^\star(\cdot, u^\star)$, which were defined at the end of Section \ref{exprinHilb}.

Next, we take 7 triples $(S^{(i)},a_1^{(i)},c_1^{(i)})$, where $S^{(i)}$ is an irreducible smooth projective surface and $a_1^{(i)},c_1^{(i)} \in A^1(S^{(i)})$, such that the corresponding vectors $$w_i:=((a_1^{(i)})^2,\ldots, \chi(\O_{S^{(i)}})) \in \Q^7$$ are $\Q$-independent (see Section \ref{toricsec} for one such choice). Then the vector $w = (a_1^2, \ldots, \chi(\O_S))$ of Chern numbers of any triple $(S,a_1,c_1)$, where $S$ is an irreducible smooth projective surface and $a_1, c_1 \in A^1(S)$, can also be realized as the vector of Chern numbers of an appropriate disjoint union of $(S^{(i)},a_1^{(i)},c_1^{(i)})$ (as long as the coefficients $n_i$ in the decomposition $w = \sum_{i=1}^{7} n_i w_i$ are non-negative integers).  From this observation $A_1^\star, \ldots, A_7^\star$ can be constructed in terms of $G^\star$ evaluated on the basis $\{w_i\}_{i=1}^{7}$ as in \cite[Lem.~5.5]{GNY1}.
\end{proof}

For $\star \in \{\cob,\elg\}$ and any $\underline \alpha=(\alpha_1,\alpha_2,\alpha_3,\alpha_4,\alpha_5,\alpha_6,\alpha_7) \in \Z^7$ we define
\begin{align}
\begin{split} \label{defsfA}
\sfA_{\underline \alpha }^{\star}(s,p,u^\star)
:=&- 2 \Bigg( 2^{-1} \Bigg( \frac{2s}{f^\star(s,u^\star)} \Bigg)^{2} \Bigg( \frac{-2s}{f^\star(-s,u^\star)} \Bigg)^{2} p^{-1} A_1(s,2p,u^\star) \Bigg)^{\alpha_1}   \\
&\times \Bigg( 2 \Bigg( \frac{2s}{f^\star(s,u^\star)} \Bigg)^{-2} \Bigg( \frac{-2s}{f^\star(-s,u^\star)} \Bigg)^{-2} p A_2(s,2p,u^\star)  \Bigg)^{\alpha_2}   \\
&\times \Bigg( 2^{-\frac{1}{2}} \Bigg( \frac{2s}{f^\star(s,u^\star)} \Bigg)^{\frac{1}{2}} \Bigg( \frac{-2s}{f^\star(-s,u^\star)} \Bigg)^{\frac{1}{2}} A_3(s,2p,u^\star)  \Bigg)^{\alpha_3} \\
&\times \Bigg(  \Bigg( \frac{2s}{f^\star(s,u^\star)} \Bigg) \Bigg( \frac{-2s}{f^\star(-s,u^\star)} \Bigg)^{-1} A_4(s,2p,u^\star)  \Bigg)^{\alpha_4}    \\
&\times \Bigg( 2^{\frac{1}{2}} \Bigg( \frac{2s}{f^\star(s,u^\star)} \Bigg)^{-\frac{1}{2}} \Bigg( \frac{-2s}{f^\star(-s,u^\star)} \Bigg)^{\frac{1}{2}} A_5(s,2p,u^\star)  \Bigg)^{\alpha_5}  \\
&\times A_6(s,2p,u^\star) ^{\alpha_6}   \\
&\times \Bigg( \frac{s}{2} \Bigg( \frac{2s}{f^\star(s,u^\star)} \Bigg) \Bigg( \frac{-2s}{f^\star(-s,u^\star)} \Bigg) A_7(s,2p,u^\star)  \Bigg)^{\alpha_7}.
\end{split}
\end{align}
Proposition \ref{desc}, Theorem \ref{mocthm}, and Proposition \ref{univ} at once imply the following result.
\begin{corollary} \label{univcor}
Suppose $S$ satisfies $b_1(S)=0$ and $p_g(S) > 0$. Let $H, c_1, c_2$ be chosen such that there exist no rank 2 strictly Gieseker $H$-semistable sheaves with Chern classes $c_1, c_2$ on $S$. Assume furthermore that: 
\begin{itemize}
\item[(i)] $c_2 <  \frac{1}{2} c_{1}(c_1-K_S) + 2\chi(\O_S)$.
\item[(ii)] $p_{\ch} > p_{K_S}$, where $p_{\ch}$ and $p_{K_S}$ are the reduced Hilbert polynomials of $\ch$ and $K_S$.
\item[(iii)] For all SW basic classes $a_1$ satisfying $a_1 H \leq (c_1 -a_1) H$ the inequality is strict. 
\end{itemize}
Then
\begin{align*}
\pi_*[M_{S}^{H}(2,c_1,c_2)]^{\vir}_{\Omega_*}&=\Coeff_{s^0 p^{c_2}}
\Big[ \sum_{{\scriptsize{\begin{array}{c} a_1 \in H^2(S,\Z) \\ a_1 H < (c_1-a_1) H \end{array}}}} \SW(a_1) \, 
\sfA_{(a_1^2,a_1 c_1,c_1^2,a_1 K_S, c_1 K_S, K_S^2, \chi(\O_S))}^{\cob}(s,p,\bfv)\Big], \\
Ell^{\vir}(M_{S}^{H}(2,c_1,c_2))&=\Coeff_{s^0 p^{c_2}}
\Big[ \sum_{{\scriptsize{\begin{array}{c} a_1 \in H^2(S,\Z) \\ a_1 H < (c_1-a_1) H \end{array}}}} \SW(a_1) \, 
\sfA_{(a_1^2,a_1 c_1,c_1^2,a_1 K_S, c_1 K_S, K_S^2, \chi(\O_S))}^{\elg}(s,p,q,y)\Big].
\end{align*}
\end{corollary}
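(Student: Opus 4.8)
The plan is to obtain both formulas by concatenating the three preceding results and then verifying that the factors collected in the definition \eqref{defsfA} of $\sfA_{\underline \alpha}^\star$ account precisely for every normalization introduced along the way. Since the notation of Section \ref{notsec} treats $\star = \cob$ and $\star = \elg$ uniformly, and both $\sfT^\star$ obey the multiplicativity \eqref{mult}, I would run the argument once for a general $\star$.

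First I would combine Proposition \ref{desc} with \eqref{choiceP} to identify $\pi_*[M]^{\vir}_{\Omega_*}$ and $Ell^{\vir}(M)$ with $\int_{[M]^{\vir}} P^\star(\E,u^\star)$, and then feed $P^\star$ into Mochizuki's formula (Theorem \ref{mocthm}), rewriting the integral as
\[
-2^{1-\chi(\ch)} \sum_{\substack{c_1 = a_1 + a_2 \\ a_1 H < a_2 H}} \SW(a_1)\, \cA(a_1,a_2,c_2);
\]
the universal-sheaf hypothesis of Theorem \ref{mocthm} is removed by Remark \ref{univexists}. Setting $a_2 = c_1 - a_1$ turns the index set into $\{a_1 : a_1 H < (c_1 - a_1)H\}$, and since $\SW$ vanishes off the finitely many basic classes the sum may be harmlessly extended to all $a_1 \in H^2(S,\Z)$. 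Next I would unfold \eqref{cA} and \eqref{Psi}: by construction $\cA(a_1, c_1 - a_1, c_2) = \Coeff_{s^0} \Coeff_{p^{c_2 - a_1 a_2}} G(p)$, where $G(p) := \sum_{n_1,n_2 \geq 0} p^{n_1 + n_2} \int_{S^{[n_1]} \times S^{[n_2]}} \widetilde\Psi$ is exactly the $s$-series normalized in Definition \ref{defZ}. Thus $G$ equals $\sfZ_S^\star(a_1, c_1, s, p, u^\star)$ times the $p$-independent prefactor $(2s)^{\chi(\O_S)} (2s/f^\star(s,u^\star))^{\chi(c_1 - 2a_1)} (-2s/f^\star(-s,u^\star))^{\chi(2a_1 - c_1)}$, and Proposition \ref{univ} replaces $\sfZ_S^\star$ by the monomial $\prod_{i=1}^7 (A_i^\star)^{w_i}$ in the seven Chern numbers $w = (a_1^2, a_1 c_1, c_1^2, a_1 K_S, c_1 K_S, K_S^2, \chi(\O_S))$.

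The remaining, and only delicate, step is the bookkeeping matching this assembled expression against $\sfA_{\underline \alpha}^\star$. Riemann-Roch on $S$ gives $\chi(\ch) = \tfrac12 c_1(c_1 - K_S) + 2\chi(\O_S) - c_2$, so hypothesis (i) of Theorem \ref{mocthm} is precisely the corollary's inequality (i), while (ii) and (iii) are identical; this justifies the replacement of hypotheses. I would then split $-2^{1-\chi(\ch)}$ into the $c_2$-dependent factor $2^{c_2}$ and a $c_2$-independent remainder. The $2^{c_2}$ is absorbed by the rescaling $p \mapsto 2p$, since $\Coeff_{p^{c_2}} H(2p) = 2^{c_2}\Coeff_{p^{c_2}}H(p)$, which explains the arguments $A_i^\star(s, 2p, u^\star)$ in \eqref{defsfA} (the prefactor of $\sfZ_S^\star$ being $p$-independent, hence unaffected). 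The gap between the extraction degree $c_2 - a_1 a_2 = c_2 - a_1 c_1 + a_1^2$ inside $\cA$ and the target degree $c_2$ is closed by the explicit monomial $p^{a_1 c_1 - a_1^2}$ built from the $p^{-1}$ and $p$ in the $\alpha_1$- and $\alpha_2$-slots of \eqref{defsfA}, which also carries the accompanying scalar $2^{a_1 c_1 - a_1^2}$.

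Finally I would expand $\chi(c_1 - 2a_1)$ and $\chi(2a_1 - c_1)$ by Riemann-Roch and verify term-by-term that the total exponents of $2s/f^\star(s,u^\star)$, of $-2s/f^\star(-s,u^\star)$, and of the scalars $2$ and $s$, distributed over the seven $\alpha_i$-slots of \eqref{defsfA}, agree with those produced jointly by the prefactor of $\sfZ_S^\star$, the remainder of $-2^{1-\chi(\ch)}$, the degree shift, and the factor $(2s)^{\chi(\O_S)}$; the half-integer exponents appearing in the $c_1^2$, $a_1 K_S$, and $c_1 K_S$ slots are exactly what Riemann-Roch forces. I expect this factor-tracking — aligning the powers of $2$, of $s$, of $p$, and the half-integer exponents all at once — to be the main obstacle, but it is entirely mechanical; all the conceptual content already resides in Proposition \ref{desc}, Theorem \ref{mocthm}, and Proposition \ref{univ}, which is why the corollary follows at once.
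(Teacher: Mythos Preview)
Your proposal is correct and follows exactly the route the paper indicates: the corollary is obtained by concatenating Proposition~\ref{desc}, Theorem~\ref{mocthm} (with Remark~\ref{univexists}), and Proposition~\ref{univ}, after which only the mechanical factor-tracking you outline remains. One tiny slip: in \eqref{defsfA} the $\alpha_4$ (i.e.~$a_1K_S$) slot carries integer exponents $\pm 1$ on the $2s/f^\star(\pm s,u^\star)$ factors, not half-integers --- the half-integer exponents live only in the $\alpha_3$ and $\alpha_5$ slots --- but this does not affect the argument.
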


\begin{remark} \label{strongform}
Following Remark \ref{assumpmocthm}, we conjecture that this corollary holds without assuming (ii) and (iii) and with the sum replaced by the sum over all $a_1 \in H^2(S,\Z)$. We refer to this as ``the strong form of Mochizuki's formula''.
\end{remark}

\section{Expression in terms of combinatorics} \label{toricsec}

Consider the following seven choices
\begin{align*}
(S,a_1,c_1) = \, &(\PP^2,\O,\O), (\PP^2,\O(1),\O(1)), (\PP^2,\O,\O(1)), (\PP^2,\O(1),\O(2)), \\
&(\PP^1 \times \PP^1, \O,\O), (\PP^1 \times \PP^1, \O(1,0),\O(1,0)), (\PP^1 \times \PP^1, \O,\O(1,0)).
\end{align*}
Then the corresponding $7 \times 7$ matrix with rows 
$$
(a_1^2,a_1 c_1,c_1^2,a_1 K_S, c_1 K_S, K_S^2, \chi(\O_S))
$$
has full rank. Hence the universal functions $A_1^{\star}, \ldots, A_7^{\star}$, for $\star \in \{\cob,\elg\}$, are entirely determined by $\sfZ_S^{\star}$ for the above seven choices of $(S,a_1,c_1)$. Therefore we want to calculate $\sfZ_S^{\star}$ on toric surfaces. We use Atiyah-Bott localization in order to turn this into a combinatorial problem, which can be implemented on a computer, allowing us to determine the universal functions $A_i^{\star}$ up to certain orders in the formal variables $p,u^\star,s$.

Let $S$ be a toric surface with torus $T = \C^{*2}$ and topological Euler characteristic $e(S)$. Let $\{U_\sigma\}_{\sigma = 1, \ldots, e(S)}$ be the cover of maximal $T$-invariant affine open subsets of $S$. On $U_\sigma$ we use coordinates $x_\sigma, y_\sigma$ such that $T$ acts with characters of weight $v_\sigma, w_\sigma \in \Z^2$
$$
t \cdot (x_\sigma,y_\sigma) = (\chi(v_\sigma)(t) \, x_\sigma, \chi(w_\sigma)(t) \, y_\sigma).
$$
Here $\chi(m) : T \rightarrow \C^*$ denotes the character of weight $m \in \Z^2$. 

Consider the integrals over $S^{[n_1]} \times S^{[n_2]}$ appearing in Definition \ref{defZ}. Let 
$$
\widetilde{T} := T \times \C^*_{\mathbb{M}},
$$ 
where $\C^*_{\mathbb{M}} = \C^*$ denotes the torus coming from master space localization (Remark \ref{masterC}), which acts trivially on $S^{[n_1]} \times S^{[n_2]}$. The action of $T$ on $S^{[n_1]} \times S^{[n_2]}$ is induced from the action of $T$ on $S$. The $T$-fixed locus of $S^{[n_1]} \times S^{[n_2]}$ can be indexed by pairs $(\bslambda,\bsmu)$ with
$$
\bslambda = \{\lambda^{(\sigma)}\}_{\sigma=1, \ldots, e(S)}, \ \bsmu = \{\mu^{(\sigma)}\}_{\sigma=1, \ldots, e(S)},
$$
where $\lambda^{(\sigma)}$, $\mu^{(\sigma)}$ are partitions satisfying
\begin{equation} \label{size}
\sum_\sigma |\lambda^{(\sigma)}| = n_1, \ \sum_\sigma |\mu^{(\sigma)}| = n_2.
\end{equation}
Here $|\lambda| = \sum_{i=1}^{\ell} \lambda_i$ denotes the size of partition $\lambda=(\lambda_1 \geq \cdots \geq \lambda_\ell)$. $\curly{l}$ A partition $\lambda$ corresponds to a monomial ideal of $\C[x,y]$ as follows
$$
I_{Z_\lambda} := (y^{\lambda_1}, xy^{\lambda_2}, \ldots, x^{\ell-1}y^{\lambda_\ell}, x^{\ell}).
$$
For a partition $\lambda^{(\sigma)}$ we denote the subscheme defined by the corresponding monomial ideal in variables $x_\sigma,y_\sigma$ by $Z_{\lambda^{(\sigma)}}$. 

Let $a_1,c_1 \in A^1(S)$. In order to apply localization, we make an arbitrary choice of $T$-equivariant structure on the line bundles $\O(a_1)$, $\O(c_1-a_1)$. For any $T$-equivariant divisor $a$, the restriction $\O(a)|_{U_{\sigma}}$
is trivial with $T$-equivariant structure determined by some character of weight $a_\sigma \in \Z^2$. Consider the following element of $K^{0}_{\widetilde{T}}(S^{[n_1]} \times S^{[n_2]})$
\begin{align*}
E_{n_1,n_2} :=\, &R\Gamma(\O(c_1-2a_1)) \otimes \O \otimes \mathfrak{s}^2 + R\Gamma(\O(2a_1-c_1)) \otimes \O \otimes \mathfrak{s}^{-2} + 2 R\Gamma(\O_S) \otimes \O \\
&- R\pi_* R\hom (\I_1(a_1) \otimes \s^{-1} \oplus \I_2(c_1-a_1) \otimes \s,\I_1(a_1) \otimes \s^{-1} \oplus \I_2(c_1-a_1) \otimes \s). 
\end{align*}
Applying Atiyah-Bott localization, we see that the integral in Definition \ref{defZ} is given by
\begin{align}
\begin{split} \label{toricint}
\int_{S^{[n_1]} \times S^{[n_2]}}\widetilde{\Psi}(a_1,c_1-a_1,n_1,n_2) =\, &\sum_{(\bslambda,\bsmu)}  \prod_\sigma \frac{\Eu(H^0(\O(a_1)|_{Z_{\lambda^{(\sigma)}}}))}{\Eu(T_{Z_{\lambda^{(\sigma)}}})} \\
&\times \frac{\Eu(H^0(\O(c_1-a_1)|_{Z_{\mu^{(\sigma)}}}) \otimes \s^2)}{\Eu(T_{Z_{\mu^{(\sigma)}}})} \\
&\times \frac{\sfT^{\star}_{\widetilde{T}}(E_{n_1,n_2}|_{(Z_{\lambda^{(\sigma)}},Z_{\mu^{(\sigma)}})},u^\star)}{\Eu(E_{n_1,n_2}|_{(Z_{\lambda^{(\sigma)}},Z_{\mu^{(\sigma)}})} - T_{Z_{\lambda^{(\sigma)}}} - T_{Z_{\mu^{(\sigma)}}})},
\end{split}
\end{align}
where $\Eu(\cdot)$ denotes $\widetilde{T}$-equivariant Euler class, $T_{Z}$ denotes the $\C^{*2}$-representation of the tangent space of the Hilbert scheme at $Z \subset \C^2$, the sum is over all $(\bslambda,\bsmu)$ satisfying \eqref{size}, and $\sfT^{\star}_{\widetilde{T}}(\cdot,u^\star)$ is the $\widetilde{T}$-equivariant version of \eqref{defTcob} and \eqref{defTelg} (defined as at the end of Section \ref{exprinHilb}). The calculation is now reduced to the computation of the following elements of the $T$-equivariant $K$-group $K^0_{T}(\pt)$
\begin{align*}
&H^0(\O(a)|_{Z_{\lambda^{(\sigma)}}}), \quad R\Hom_S(\O_{Z_{\lambda^{(\sigma)}}}, \O_{Z_{\lambda^{(\sigma)}}}), \quad R\Hom_S(\O_{Z_{\lambda^{(\sigma)}}}, \O_{Z_{\mu^{(\sigma)}}}(a)),
\end{align*}
for certain $T$-equivariant divisors $a$. The first one is straight-forward
$$
Z_{\lambda^{(\sigma)}} = \sum_{i=0}^{\ell(\lambda^{(\sigma)}) - 1} \sum_{j=0}^{\lambda^{(\sigma)}_{i+1}-1} \chi(v_\sigma)^i \, \chi(w_\sigma)^j.
$$
Multiplying by $\chi(a_\sigma)$ gives $H^0(\O(a)|_{Z_{\lambda^{(\sigma)}}})$. Define 
$$
\overline{\chi(m)} := \chi(-m) = \frac{1}{\chi(m)},
$$
for any $m \in \Z^2$. This defines an involution on $K_0^{T}(\pt)$ by $\Z$-linear extension. 
\begin{proposition}
Let $W,Z \subset S$ be 0-dimensional $T$-invariant subschemes supported on a chart $U_\sigma \subset S$ and let $a$ be a $T$-equivariant divisor on $S$ with weight $a_\sigma \in \Z^2$ on $U_\sigma$. Then we have the following equality in $K_0^{T}(pt)$
$$
R\Hom_S(\O_W, \O_Z(a)) = \chi(a_\sigma) \, \overline{W} Z \frac{(1-\chi(v_\sigma)) (1- \chi(w_\sigma))}{\chi(v_\sigma) \chi(w_\sigma)}.
$$
\end{proposition}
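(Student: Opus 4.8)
The plan is to reduce the statement to a computation over the affine chart $U_\sigma \cong \Spec\C[x_\sigma,y_\sigma]$ and then to read off the $K$-class from a $T$-equivariant free resolution. First I would localize to the chart. Since $W$ and $Z$ are zero-dimensional and supported inside the affine open $U_\sigma$, the sheaves $\ext^i_S(\O_W,\O_Z(a))$ are supported on finitely many points, so they carry no higher cohomology and $R\Hom_S(\O_W,\O_Z(a)) = R\Hom_{U_\sigma}(\O_W|_{U_\sigma},\O_Z(a)|_{U_\sigma})$. Writing $R := \C[x_\sigma,y_\sigma]$, this is $R\Hom_R(\O_W,\O_Z(a))$ between the finite-length modules $\O_W = R/I_W$ and $\O_Z(a)$. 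As $\O_S(a)|_{U_\sigma}$ is equivariantly trivial of weight $a_\sigma$, we have $\O_Z(a)|_{U_\sigma} \cong \O_Z \otimes \chi(a_\sigma)$, and the scalar character $\chi(a_\sigma)$ factors out of the $\Hom$. It therefore suffices to prove
\[
R\Hom_R(\O_W,\O_Z) = \overline{W}\,Z\,\frac{(1-\chi(v_\sigma))(1-\chi(w_\sigma))}{\chi(v_\sigma)\chi(w_\sigma)}
\]
in $K_0^T(\pt)$, where $W = [\O_W]$ and $Z = [\O_Z]$ denote the finite characters of the structure sheaves.

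Next I would use a resolution. Since $R$ is a polynomial ring in two variables, the Hilbert syzygy theorem supplies a finite $T$-equivariant free resolution $P_\bullet \to \O_W$, each $P_i = \bigoplus_k R\langle m_{i,k}\rangle$ being a sum of rank-one free modules with generators in weights $m_{i,k}$. Setting the $K$-polynomial $P_W := \sum_i (-1)^i \sum_k \chi(m_{i,k})$, exactness of $P_\bullet$ gives $W = P_W \cdot [R]$ in the completion of the representation ring in which $[R] = \big((1-\chi(v_\sigma))(1-\chi(w_\sigma))\big)^{-1} = \sum_{i,j\ge 0}\chi(v_\sigma)^i\chi(w_\sigma)^j$ converges. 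Using the Koszul resolution of the origin to rewrite $(1-\chi(v_\sigma))(1-\chi(w_\sigma))[R] = 1$, I obtain the identity of Laurent polynomials $P_W = W\,(1-\chi(v_\sigma))(1-\chi(w_\sigma))$.

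Finally I would dualize. Applying $\Hom_R(-,\O_Z)$ to $P_\bullet$ term by term gives $\Hom_R(R\langle m_{i,k}\rangle,\O_Z) \cong \O_Z \otimes \chi(-m_{i,k})$, all finite-dimensional, so $R\Hom_R(\O_W,\O_Z) = \sum_i(-1)^i\sum_k \chi(-m_{i,k})\,Z = \overline{P_W}\,Z$, where $\overline{(\cdot)}$ is the involution $\chi(m)\mapsto\chi(-m)$. Substituting $P_W = W(1-\chi(v_\sigma))(1-\chi(w_\sigma))$ and simplifying $\overline{(1-\chi(v_\sigma))(1-\chi(w_\sigma))} = (1-\chi(v_\sigma))(1-\chi(w_\sigma))/(\chi(v_\sigma)\chi(w_\sigma))$ yields the claim, and reinstating the factor $\chi(a_\sigma)$ from the first step completes the proof. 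The one genuinely delicate point is the denominator manipulation in the middle step: the class $[R]$ is infinite-dimensional, so the identity $P_W = W(1-\chi(v_\sigma))(1-\chi(w_\sigma))$ must be justified either in a suitable completion or localization of the representation ring, or cleanly by tensoring $P_\bullet$ with the Koszul complex of the origin and comparing honest finite-dimensional characters, after which everything is an equality of Laurent polynomials.
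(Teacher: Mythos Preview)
Your argument is correct and in fact more direct than the paper's. Both proofs localize to the affine chart and compute via equivariant free resolutions, but you resolve the structure sheaf $\O_W$ directly and apply $\Hom_R(-,\O_Z)$, whereas the paper (following the style of \cite{MNOP}) resolves the \emph{ideal} sheaves $I_W$ and $I_Z$, computes $R\Hom_{U_\sigma}(I_W,I_Z(a))$ in terms of the Poincar\'e polynomials $P_W,P_Z$, and only then extracts $R\Hom(\O_W,\O_Z(a))$ via the identities $\O_W=\O_{U_\sigma}-I_W$ and $\O_Z=\O_{U_\sigma}-I_Z$. Your route avoids the detour through the ideals and reaches the formula in one step; the paper's route has the side benefit that the regularized quantity $\Gamma(U_\sigma,\O(a))-R\Hom_{U_\sigma}(I_W,I_Z(a))$ is itself of independent interest in this context, since the integrand in Mochizuki's formula is built from ideal sheaves rather than structure sheaves. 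Your remark about justifying $P_W=W(1-\chi(v_\sigma))(1-\chi(w_\sigma))$ as an equality of Laurent polynomials (rather than an identity involving the infinite-dimensional class $[R]$) is exactly the right point to flag, and the Koszul-complex trick you sketch is a clean way to make it honest.
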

\begin{proof}
This follows from a 2-dimensional version of a calculation in \cite{MNOP}. The argument is given in \cite[Prop.~4.1]{GK1}. The main steps are as follows.

Let $v := v_\sigma$, $w := w_\sigma$, $a := a_\sigma$, and write $U_\sigma = \Spec R$ with $R = \C[x_\sigma,y_\sigma]$. Then
$$
R\Hom_S(\O_W, \O_Z(a)) = R \Hom_{U_\sigma}(\O_{W}, \O_{Z}(a)),
$$
because $W,Z$ are supported on $U_\sigma$. The formula of the proposition follows from
\begin{align*}
&\Gamma(U_\sigma, \O(a)) - R \Hom_{U_\sigma}(I_W,I_Z(a)) = \chi(a) \Big( Z + \frac{\overline{W}}{\chi(v) \chi(w)} - \overline{W} Z \frac{(1-\chi(v)) (1- \chi(w))}{\chi(v) \chi(w)}  \Big)
\end{align*}
by using $I_Z = \O_{U_\sigma} - \O_{Z}$, $I_W = \O_{U_\sigma} - \O_{W}$. This formula can be derived as follows.

Choose $T$-equivariant graded free resolutions
\begin{align*}
0 \rightarrow E_r &\rightarrow \cdots \rightarrow E_0 \rightarrow I_{W} \rightarrow 0, \\
0 \rightarrow F_s &\rightarrow \cdots \rightarrow F_0 \rightarrow I_{Z} \rightarrow 0,
\end{align*}
where
\begin{align*}
E_i = \bigoplus_j R(d_{ij}), \ F_i = \bigoplus_j R(e_{ij}).
\end{align*}
Then we have Poincar\'e polynomials
$$
P_W = \sum_{i,j} (-1)^i \chi(d_{ij}), \ P_Z = \sum_{i,j} (-1)^i \chi(e_{ij}),
$$
and
\begin{align*}
R \Hom_{U_\sigma}(I_W,I_Z(a)) &= \sum_{i,j,k,l} (-1)^{i+k} \Hom(R(d_{ij}),R(a+e_{kl})) \\
&= \sum_{i,j,k,l} (-1)^{i+k} R(a+e_{kl} - d_{ij}) \\
&= \frac{\chi(a) \overline{P}_W P_Z}{(1-\chi(v))(1-\chi(w))}.
\end{align*}
The formula follows by eliminating $P_Z, P_W$ using $W =  \O_{U_\sigma} - I_W$, $Z =  \O_{U_\sigma} - I_Z$.
\end{proof}

This reduces the calculation of \eqref{toricint} to combinatorics, which we implemented in a PARI/GP program. We computed $A_i^{\cob}, A_i^{\elg}$ up to the following orders: 
\begin{itemize}
\item $A_i^{\elg}$ up to order $p^i q^j$, where $i\le 6$ and $j\le 14$.
\item $A_i^{\cob}$ up to order $p^6$ with the specialization $v_i=0$ for $i\ge 6$.
\end{itemize}

\section{Further conjectures and consequences} \label{conseqsec}

In this section we introduce two further conjectures:
\begin{itemize}
\item Conjecture \ref{numconj} is a statement about intersection numbers on Hilbert schemes of points. Together with the strong form of Mochizuki's formula it implies Conjectures \ref{conjell} and \ref{conjcob}. It also implies a generalization of Conjectures \ref{conjell} and \ref{conjcob} to arbitrary blow-ups of surfaces $S$ satisfying $b_1(S)=0$, $p_g(S)>0$, $K_S\neq 0$, and the only Seiberg-Witten basic classes of $S$ are $0$ and $K_S$.
\item Conjecture \ref{generalsurfconj} generalizes Conjectures \ref{conjell} and \ref{conjcob} to arbitrary surfaces $S$ with $b_1(S)=0$ and $p_g(S)>0$. This conjecture is a refinement of (part of) a formula from the physics literature due to Dijkgraaf-Park-Schroers \cite[Eqn.~(6.1), lines 2+3]{DPS}. Conjecture \ref{generalsurfconj} implies a blow-up formula (Proposition \ref{blowupcor}). This can be seen as a (partial!) refinement of the blow-up formula of W.-P.~Li and Z.~Qin \cite{LQ1, LQ2}. Conjecture \ref{generalsurfconj} also implies a formula for surfaces $S$ with $b_1(S) = 0$ and canonical divisor with irreducible reduced connected components (Proposition \ref{propdisconn}). This refines a formula from physics due to Vafa-Witten \cite[Eqn.~(5.45)]{VW}. 
\end{itemize}

In this section we encounter the ratio $F_1^\star(-p,u^\star) / F_1^\star(p,u^\star)$. By definition \eqref{L_a}, we have $\sfL_a(\cdots)|_{(-p,q,y)} = \sfL_a(\cdots)|_{(p,q,y)}$ for any even $a$. Hence the definition of $F_1^{\elg}(p,q,y)$ in Section \ref{notsec} implies
$$
\frac{F_1^{\elg}(-p,q,y)}{F_1^{\elg}(p,q,y)} = \frac{\sfL(2 \phi_{0,\frac{1}{2}})}{\sfL(2 \phi_{0,\frac{1}{2}}) \big|_{(-p,q,y)}}.
$$

Before we continue, we motivate the shape of the formula of Conjecture \ref{conjell}.
\begin{remark} \label{motivate}
Let $S$ be a smooth projective surface with $b_1(S) = 0$, $p_g(S)>0$, and $K_S \neq 0$. Suppose the only Seiberg-Witten basic classes of $S$ are $0$ and $K_S$.
\begin{enumerate}
\item From the results of \cite{GK1}, we expected that the formula for virtual elliptic genera of moduli spaces of Gieseker $H$-stable rank 2 sheaves on $S$ should have strong similarities with the case of Hilbert schemes of points. The Dijkgraaf-Moore-Verlinde-Verlinde formula for elliptic genera of Hilbert schemes of points involves a Borcherds type lift of $\phi_{0,1}$, so we expected to be able to express the generating function of elliptic genera in the rank 2 case in terms of similar quasi-Jacobi forms of index 0, which led us to consider $\phi_{0,\frac{k}{2}}$ introduced in Section \ref{intro}.
\item From the results of \cite{GK1} we also expected the generating function of virtual elliptic genera (and cobordism classes) of moduli spaces of Gieseker $H$-stable rank 2 sheaves on $S$ to be of the form $8 A^{\chi(\O_S)} B^{K_S^2}$, for some universal series $A$, $B$. When ``stable=semistable'', moduli spaces of stable sheaves on a $K3$ surface are deformation equivalent to Hilbert schemes of points of the same dimension \cite{Huy, Yos}. Therefore $A$ is given by the DMVV formula (see also Conjecture \ref{generalsurfconj}, which includes the case $S$ is $K3$.).
\item Similarly, $B$ would then be determined on the blow-up of a $K3$ surface in a point. Matching coefficients for virtual dimension $\leq 4$ led to the explicit form of the formula. Once the prediction was in place, we tested it in many examples, and up to much higher virtual dimension, as will be described in Section \ref{versec}.
\end{enumerate}
\end{remark}

\subsection{Numerical conjecture}

The following conjecture generalizes \cite[Conj.~6.1]{GK1}. 

\begin{conjecture} \label{numconj}
Let $\star \in \{\cob,\elg\}$. Let $\underline{\beta} \in \Z^4$ be such that $\beta_1 \equiv \beta_2 \mod 2$ and $\beta_3 \geq \beta_4 - 3$, and let $(\gamma_1,\gamma_2)\in \Z^2$. Then for all $n < \frac{1}{2}(\beta_1-\beta_2)+2\beta_4$, we have  
\begin{align*}
\Coeff_{s^0 p^{4n-\beta_1-3\beta_4}} \Big[& p^{-\beta_1-3\beta_4} \sfA^{\star}_{(\gamma_1,\gamma_2,\beta_1,\gamma_1,\beta_2,\beta_3,\beta_4)}(s,p^4,u^\star) \\
&+ (-1)^{\beta_4} p^{-\beta_1-3\beta_4} \sfA^{\star}_{(\beta_3-\gamma_1,\beta_2-\gamma_2,\beta_1,\beta_3-\gamma_1,\beta_2,\beta_3,\beta_4)}(s,p^4,u^\star) \Big]
\end{align*}
equals the coefficient of $p^{4n-\beta_1-3\beta_4}$ of 
\begin{align*}
\psi_{\gamma_1, \gamma_2, \beta_3, \beta_4}^{\star}(p,u^\star) := 8 \Bigg( \frac{1}{2} F_0^\star(p,u^\star) \Bigg)^{\beta_4} \Bigg( 2 F_1^\star(p,u^\star) \Bigg)^{\beta_3} (-1)^{\gamma_2} \Bigg( \frac{F_1^\star(-p,u^\star)}{F_1^\star(p,u^\star)} \Bigg)^{\gamma_1}.
\end{align*}
\end{conjecture}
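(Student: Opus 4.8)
The plan is to read the claim as an identity between two universal power series in $p$ (and $u^\star$), valid coefficient-by-coefficient in the stated range, and to reduce it to relations among the seven universal functions $A_1^\star,\ldots,A_7^\star$ of Proposition \ref{univ} together with the explicit series $F_0^\star,F_1^\star$ of Section \ref{notsec}. Both sides are manifestly built from these ingredients: the right-hand side by definition, and the left-hand side through the product formula \eqref{defsfA} for $\sfA^\star_{\underline\alpha}$.

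First I would make the dependence on $(\gamma_1,\gamma_2)$ explicit. In the index $(\gamma_1,\gamma_2,\beta_1,\gamma_1,\beta_2,\beta_3,\beta_4)$ the parameter $\gamma_1$ occupies slots $1$ and $4$ and $\gamma_2$ slot $2$, so by \eqref{defsfA} the first summand factors as $\sfA^\star_{(0,0,\beta_1,0,\beta_2,\beta_3,\beta_4)}\cdot B^{\gamma_1}C^{\gamma_2}$, where $B$ is the product of the $\alpha_1$- and $\alpha_4$-factors and $C$ is the $\alpha_2$-factor of \eqref{defsfA}, both evaluated at $(s,2p,u^\star)$. The second summand, whose index is obtained by $\gamma_1\mapsto\beta_3-\gamma_1$, $\gamma_2\mapsto\beta_2-\gamma_2$, likewise factors as $\sfA^\star_{(0,0,\beta_1,0,\beta_2,\beta_3,\beta_4)}\cdot B^{\beta_3-\gamma_1}C^{\beta_2-\gamma_2}$. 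Hence the bracket on the left equals $\sfA^\star_{(0,0,\beta_1,0,\beta_2,\beta_3,\beta_4)}\big(B^{\gamma_1}C^{\gamma_2}+(-1)^{\beta_4}B^{\beta_3-\gamma_1}C^{\beta_2-\gamma_2}\big)$, an expression whose two summands are exchanged by the involution $a_1\leftrightarrow K_S-a_1$ on Seiberg-Witten basic classes.

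Next I would match this to the right-hand side. Setting $\gamma_1=\gamma_2=0$, the bracket becomes $\sfA^\star_{(0,0,\beta_1,0,\beta_2,\beta_3,\beta_4)}\big(1+(-1)^{\beta_4}B^{\beta_3}C^{\beta_2}\big)$, and the assertion is that its relevant coefficient equals that of $8(\tfrac12 F_0^\star)^{\beta_4}(2F_1^\star)^{\beta_3}$. This is exactly the content of Corollary \ref{univcor} (in the strong form of Remark \ref{strongform}) for a surface with $\chi(\O_S)=\beta_4$, $K_S^2=\beta_3$ whose only basic classes are $0$ and $K_S$: the two summands are the contributions $a_1=0$ (weight $\SW(0)=1$) and $a_1=K_S$ (weight $\SW(K_S)=(-1)^{\chi(\O_S)}=(-1)^{\beta_4}$, which is precisely the sign in front of the second term), and by Conjectures \ref{conjcob}/\ref{conjell} this sum is the predicted generating function. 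For general $(\gamma_1,\gamma_2)$ the extra factor $(-1)^{\gamma_2}\big(F_1^\star(-p,u^\star)/F_1^\star(p,u^\star)\big)^{\gamma_1}$ should be a blow-up correction: adjoining a $(-1)$-class to $a_1$ alters $a_1^2$, $a_1c_1$, $a_1K_S$ but fixes $\chi(\O_S)$, so only $F_1^\star$ enters, while the $p\mapsto p^4$ scaling together with $\Coeff_{s^0}$ forces the sign reversal $p\mapsto-p$. Thus the whole statement becomes equivalent to two identities purely among $A_1^\star,\ldots,A_7^\star$ and $F_0^\star,F_1^\star$, with no reference to any particular surface.

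Finally I would verify these universal identities by computing $A_1^\star,\ldots,A_7^\star$. By Proposition \ref{univ} they are determined by $\sfZ^{\star}_S$ on the seven toric configurations of Section \ref{toricsec}, each accessible through the Atiyah-Bott localization formula \eqref{toricint}; since for fixed $n$ only finitely many coefficients are involved, the hypothesis $n<\tfrac12(\beta_1-\beta_2)+2\beta_4$ — which is precisely condition (i) of Corollary \ref{univcor}, the range in which Mochizuki's formula carries no additional contributions — confines the check to the orders to which the $A_i^\star$ have been computed. This is where I expect the real difficulty to lie: I do not anticipate closed forms for all seven $A_i^\star$, so a proof valid to all orders is out of reach by this route. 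A genuinely complete argument would instead require a geometric blow-up formula for these rank-$2$ virtual invariants — of which only partial analogues, in the spirit of Li-Qin \cite{LQ1,LQ2}, are available — to secure the $(\gamma_1,\gamma_2)$-dependence unconditionally; absent that, the identity is established only to the finite order reached by the toric computations of Section \ref{versec}, which is the sense in which it is verified.
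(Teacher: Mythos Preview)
The statement is a \emph{conjecture}, and the paper does not prove it: the only ``proof'' in the paper is the numerical verification of Section~\ref{versec}. Your concluding paragraph correctly arrives at the same place --- the identity is checked only to the finite orders reached by the toric computations --- so in that sense your bottom line matches the paper.

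However, the argument you sketch in the middle is not a valid reduction, even heuristically. For the case $\gamma_1=\gamma_2=0$ you identify the left-hand bracket with the two Seiberg--Witten contributions in Corollary~\ref{univcor} for a surface with $\chi(\O_S)=\beta_4$, $K_S^2=\beta_3$, $c_1^2=\beta_1$, $c_1K_S=\beta_2$, and then invoke Conjectures~\ref{conjell}/\ref{conjcob} to equate this with the right-hand side. This is circular: in the paper's logic, Conjecture~\ref{numconj} (together with the strong form of Mochizuki's formula) \emph{implies} Conjectures~\ref{conjell} and~\ref{conjcob}, not the other way around. You cannot use the consequences to establish the hypothesis. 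Moreover, Conjecture~\ref{numconj} is posed for arbitrary $\underline\beta\in\Z^4$ subject only to the parity and Noether-type constraints; there is no guarantee that a given $\underline\beta$ is realized by an actual surface, so the appeal to Corollary~\ref{univcor} does not cover the full statement even granting the other conjectures. The same objection applies to your treatment of general $(\gamma_1,\gamma_2)$ via a ``blow-up correction'': this is an informal analogy, not an identity among the $A_i^\star$, and your claim that $\Coeff_{s^0}$ together with $p\mapsto p^4$ ``forces the sign reversal $p\mapsto -p$'' is unsubstantiated.

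In short: your factorization of $\sfA^\star_{\underline\alpha}$ into a $(\gamma_1,\gamma_2)$-independent core times $B^{\gamma_1}C^{\gamma_2}$ is correct and is a useful observation, but the rest of the argument does not reduce the conjecture to anything already known. The paper treats Conjecture~\ref{numconj} as the most primitive of the conjectures --- a statement purely about the universal functions on Hilbert schemes, logically prior to any moduli-space interpretation --- and offers only computer verification.
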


We check this conjecture in various cases in Section \ref{versec}. The first application of this conjecture is the following proposition.
\begin{proposition} 
Assume the strong form of Mochizuki's formula (Remark \ref{strongform}). Conjecture \ref{numconj} for $\star = \cob$ implies Conjecture \ref{conjcob}. Conjecture \ref{numconj} for $\star = \elg$ implies Conjecture \ref{conjell}.
\end{proposition}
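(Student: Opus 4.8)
The plan is to run a two--term bookkeeping that identifies the left--hand side of Conjecture \ref{numconj} with the two Seiberg--Witten basic classes of $S$. First I would invoke the strong form of Mochizuki's formula (Remark \ref{strongform}), i.e.\ Corollary \ref{univcor} with the sum taken over all $a_1 \in H^2(S,\Z)$ and without hypotheses (ii), (iii). Since $S$ satisfies the hypotheses of Conjectures \ref{conjell} and \ref{conjcob}, the only classes with $\SW(a_1) \neq 0$ are $a_1 = 0$ and $a_1 = K_S$, so for each $\star \in \{\cob,\elg\}$ the sum collapses to two terms,
$$\Coeff_{s^0 p^{c_2}}\big[\SW(0)\,\sfA^\star_{\underline\alpha_0}(s,p,u^\star) + \SW(K_S)\,\sfA^\star_{\underline\alpha_K}(s,p,u^\star)\big],$$
where $\underline\alpha_0 = (0,0,c_1^2,0,c_1 K_S,K_S^2,\chi(\O_S))$ and $\underline\alpha_K = (K_S^2,c_1 K_S,c_1^2,K_S^2,c_1 K_S,K_S^2,\chi(\O_S))$ are the Chern--number vectors attached to the decompositions $c_1 = 0 + c_1$ and $c_1 = K_S + (c_1-K_S)$. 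It is essential to use the \emph{strong} form here: the original inequality $a_1 H < (c_1-a_1)H$ would retain only one of the two basic classes and destroy the symmetric two--term shape that Conjecture \ref{numconj} requires.

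Next I would match this to Conjecture \ref{numconj}. Setting $\beta = (c_1^2,c_1 K_S,K_S^2,\chi(\O_S))$ and $(\gamma_1,\gamma_2) = (0,0)$, one checks directly that $\underline\alpha_0$ is the slot $(\gamma_1,\gamma_2,\beta_1,\gamma_1,\beta_2,\beta_3,\beta_4)$ and $\underline\alpha_K$ the slot $(\beta_3-\gamma_1,\beta_2-\gamma_2,\beta_1,\beta_3-\gamma_1,\beta_2,\beta_3,\beta_4)$; this reflects the involution $a_1 \mapsto K_S - a_1$ on basic classes, and the equalities $a_1^2 = a_1 K_S$ for $a_1 \in \{0,K_S\}$ are precisely what force the first and fourth entries to coincide. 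I would then reconcile the powers of $p$: substituting $p \mapsto p^4$ turns $\Coeff_{p^{c_2}}$ into $\Coeff_{p^{4c_2}}$, and since $\vd(M) = 4c_2 - c_1^2 - 3\chi(\O_S) = 4n - \beta_1 - 3\beta_4$ with $n = c_2$, multiplying by the prefactor $p^{-\beta_1 - 3\beta_4}$ and extracting $p^{\vd(M)}$ reproduces the coefficient taken in Conjecture \ref{numconj}. A short computation $\chi(\ch) = 2\chi(\O_S) - \tfrac12 c_1 K_S + \tfrac12 c_1^2 - c_2$ shows that the admissible range $n < \tfrac12(\beta_1 - \beta_2) + 2\beta_4$ is exactly assumption (i), $\chi(\ch) > 0$, of Corollary \ref{univcor}.

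Inserting the Seiberg--Witten symmetry $\SW(K_S) = (-1)^{\chi(\O_S)}\SW(0)$ (in Mochizuki's convention) together with $\SW(0) = 1$ for these surfaces, the two--term sum becomes $\sfA^\star_{\underline\alpha_0} + (-1)^{\chi(\O_S)}\sfA^\star_{\underline\alpha_K}$, which is exactly the left--hand side of Conjecture \ref{numconj} at $(\gamma_1,\gamma_2) = (0,0)$. That conjecture then identifies the coefficient of $p^{\vd(M)}$ with that of
$$\psi^\star_{0,0,K_S^2,\chi(\O_S)}(p,u^\star) = 8\Big(\tfrac12 F_0^\star(p,u^\star)\Big)^{\chi(\O_S)}\Big(2F_1^\star(p,u^\star)\Big)^{K_S^2},$$
the factor $(-1)^{\gamma_2}(F_1^\star(-p,u^\star)/F_1^\star(p,u^\star))^{\gamma_1}$ collapsing to $1$ at $\gamma = 0$. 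Unwinding the notation of Section \ref{notsec} finishes both cases: for $\star = \cob$ this is verbatim the formula of Conjecture \ref{conjcob} (with $F_0^{\cob} = (\sum_n [K3^{[n]}]p^{2n})^{1/2}$ and $F_1^{\cob} = L$), and for $\star = \elg$ it is the formula of Conjecture \ref{conjell} (with $F_0^{\elg} = 1/\sfL_2(\phi_{0,1})$ and $F_1^{\elg}$ the stated ratio of Borcherds lifts). The argument is word--for--word the same for the two values of $\star$.

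The step I expect to be delicate is the normalization and sign bookkeeping rather than any conceptual point: tracking the powers of $2$ (the leading $8$ and the $2^{1-\chi(\ch)}$ of Theorem \ref{mocthm}, already folded into the definition \eqref{defsfA} of $\sfA^\star_{\underline\alpha}$), the interplay of the substitution $p \mapsto p^4$ with the $2p$ that appears inside \eqref{defsfA}, and above all the Seiberg--Witten values. One must pin down $\SW(0) = 1$ and $\SW(K_S) = (-1)^{\chi(\O_S)}$; had the roles been reversed, I would instead match Conjecture \ref{numconj} at $(\gamma_1,\gamma_2) = (K_S^2, c_1 K_S)$ and absorb the discrepancy using its built--in symmetry, namely that the coefficient of $\psi^\star$ at $(\beta_3-\gamma_1,\beta_2-\gamma_2)$ equals $(-1)^{\beta_4}$ times the one at $(\gamma_1,\gamma_2)$, so the two formulations agree regardless.
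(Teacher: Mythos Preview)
Your argument correctly identifies the two--term structure coming from the basic classes $0,K_S$ and matches it against Conjecture~\ref{numconj} at $(\gamma_1,\gamma_2)=(0,0)$, and your sign and power--of--$p$ bookkeeping is right. However, there are two genuine gaps.

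First, you never verify the hypothesis $\beta_3 \geq \beta_4 - 3$ of Conjecture~\ref{numconj}, i.e.\ $K_S^2 \geq \chi(\O_S)-3$. This is not automatic: for $S$ minimal of general type it is Noether's inequality, but the hypotheses of Conjectures~\ref{conjell} and~\ref{conjcob} allow other surfaces (e.g.\ minimal properly elliptic with $p_g>0$ and $K_S\neq 0$, or $K3$ blown up once), and one must argue separately that the inequality still holds. The paper does this case analysis.

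Second, and more seriously, you observe that the range $n < \tfrac12(\beta_1-\beta_2)+2\beta_4$ in Conjecture~\ref{numconj} coincides with assumption~(i) of Corollary~\ref{univcor}, but you do not say what happens when $c_2$ lies \emph{outside} this range. Conjectures~\ref{conjell} and~\ref{conjcob} make a claim for every $c_2$ (with no upper bound), whereas both Conjecture~\ref{numconj} and Corollary~\ref{univcor} only apply in the restricted range. The paper closes this gap by a twisting trick: replacing $c_1$ by $c_1+tH$ for $t\gg 0$ gives an isomorphic moduli space (tensor by $\O_S(tH)$) of the same virtual dimension, and for $t$ large the new Chern data satisfy the inequality. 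Since the right--hand side $\psi_S$ depends only on $\vd(M)$, $\chi(\O_S)$, $K_S^2$, the conclusion transfers back. Without this step your argument proves the conjectures only in a bounded range of $c_2$.
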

\begin{proof}
This is proved in \cite[Prop.~6.3]{GK1}. The idea is as follows.

We only need Conjecture \ref{numconj} for $\gamma_1=\gamma_2=0$. Let $S$ be a smooth projective surface satisfying $b_1(S) = 0$, $p_g(S)>0$, $K_S \neq 0$, and the only Seiberg-Witten basic classes of $S$ are $0$ and $K_S$. Let $H,c_1,c_2$ be chosen such that there are no rank 2 strictly Gieseker $H$-semistable sheaves on $S$ with Chern classes $c_1,c_2$. Take $(\beta_1, \beta_2, \beta_3, \beta_4) = (c_1^2,c_1K_S,K_S^2,\chi(\O_S))$. Then clearly $\beta_1 \equiv \beta_2 \mod 2$. For $S$ minimal of general type, $\beta_3 \geq \beta_4 - 3$ holds by Noether's inequality. For other surfaces, $\beta_3 \geq \beta_4 - 3$ follows from some elementary considerations as explained in the proof of \cite[Prop.~6.3]{GK1}. 

Suppose $c_2$ satisfies
\begin{equation} \label{vdineq}
\vd < c_1^2 - 2c_1K_S + 5 \chi(\O_S),
\end{equation}
where $\vd$ is given by \eqref{vdformula}. Assume Conjecture \ref{numconj} for $\star = \elg$ ($\star = \cob$) holds and the strong form of Mochizuki's formula holds. Then all assumptions of Corollary \ref{univcor} are satisfied and Conjecture \ref{conjell} (Conjecture \ref{conjcob}) follows as long as \eqref{vdineq} is satisfied. When \eqref{vdineq} is not satisfied, we can replace $c_1$ by $c_1+tH$ for $t \gg 0$. Tensoring by $\O_S(tH)$, $M_S^H(r,c_1,c_2)$ is isomorphic to a moduli space of the same virtual dimension for which \eqref{vdineq} is satisfied.
\end{proof}

\subsection{Fixed first Chern class}

Let $S$ be a smooth projective surface with $b_1(S)=0$ and polarization $H$. Let $c_1$ be chosen such that there exist no rank 2 Gieseker $H$-semistable sheaves with first Chern class $c_1$. We consider the generating functions
\begin{align*}
\sfZ_{S,c_1}^{\cob}(p,\bfv) &= \sum_{c_2} \pi_* [M_S^H(2,c_1,c_2)]^{\vir}_{\Omega_*} \, p^{\vd(M_S^H(2,c_1,c_2))}, \\
\sfZ_{S,c_1}^{\elg}(p,q,y) &= \sum_{c_2} Ell^{\vir}(M_S^H(2,c_1,c_2)) \, p^{\vd(M_S^H(2,c_1,c_2))}.
\end{align*}
Set $i:=\sqrt{-1}$. We make use of the following general principle. Let $\psi(x)  = \sum_{n=0}^{\infty} \psi_n x^n$ be any formal power series in $x$ and suppose we want to extract the coefficients $\psi_n$ for which $n \equiv \alpha \mod 4$ for some $\alpha \in \Z$. This can be done as follows:
\begin{align}
\begin{split} \label{principle}
\sum_{k=0}^{3} \frac{i^{-\alpha k}}{4} \psi(i^k x) &= \sum_{k=0}^{3} \sum_{n=0}^{\infty} \frac{i^{k(n - \alpha)}}{4} \psi_n x^n \\
&=\sum_{n=0}^{\infty} \Big( \frac{1}{4} \sum_{k=0}^3 i^{k(n - \alpha)} \Big) \psi_n x^n \\
&=\sum_{n \equiv \alpha \mod 4} \psi_n x^n.
\end{split}
\end{align}
From this simple principle, the following two propositions follow at once (their analogs for virtual $\chi_y$-genus are \cite[Prop.~6.4, 6.5]{GK1}).
\begin{proposition} 
Let $\star \in \{\cob,\elg\}$ and assume Conjecture \ref{numconj} is true for $\star$. Let $\underline{\beta} \in \Z^4$ such that $\beta_1 \equiv \beta_2 \mod 2$ and $\beta_3 \geq \beta_4 - 3$, and let $(\gamma_1,\gamma_2)\in \Z^2$. 
\begin{align*}
&\Coeff_{s^0} \Big[ p^{-\beta_1-3\beta_4} \sfA^{\star}_{(\gamma_1,\gamma_2,\beta_1,\gamma_1,\beta_2,\beta_3,\beta_4)}(s,p^4,u^\star) \\
&\qquad\quad \ \, + (-1)^{\beta_4} p^{-\beta_1-3\beta_4} \sfA^{\star}_{(\beta_3-\gamma_1,\beta_2-\gamma_2,\beta_1,\beta_3-\gamma_1,\beta_2,\beta_3,\beta_4)}(s,p^4,u^\star) \Big]\\
&=2(-1)^{\gamma_2}\sum_{k=0}^3 (i^k)^{\beta_1-\beta_4} \Bigg(\frac{1}{2} F_0^\star(i^kp,u^\star) \Bigg)^{\beta_4}\Bigg( 2 F_1^\star(i^kp,u^\star) \Bigg)^{\beta_3}\Bigg(\frac{F_1^\star(-i^kp,u^\star)}{F_1^\star(i^kp,u^\star)} \Bigg)^{\gamma_1}+O(p^{\beta_1-2\beta_2+5\beta_4}).
\end{align*}
\end{proposition}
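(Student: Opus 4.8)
The plan is to deduce this directly from Conjecture~\ref{numconj} together with the elementary principle \eqref{principle}, so the argument is essentially a bookkeeping of congruences modulo $4$ and of normalization constants. Write
$$
\Phi(p) := p^{-\beta_1-3\beta_4} \sfA^{\star}_{(\gamma_1,\gamma_2,\beta_1,\gamma_1,\beta_2,\beta_3,\beta_4)}(s,p^4,u^\star) + (-1)^{\beta_4} p^{-\beta_1-3\beta_4} \sfA^{\star}_{(\beta_3-\gamma_1,\beta_2-\gamma_2,\beta_1,\beta_3-\gamma_1,\beta_2,\beta_3,\beta_4)}(s,p^4,u^\star)
$$
for the bracketed expression on the left-hand side, and abbreviate $\psi(p) := \psi_{\gamma_1,\gamma_2,\beta_3,\beta_4}^{\star}(p,u^\star)$. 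The first thing I would record is that $\Phi(p)$ is a (Laurent) series in $p^4$ multiplied by $p^{-\beta_1-3\beta_4}$; hence $\Coeff_{s^0}[\Phi(p)]$ is supported on powers $p^m$ with $m \equiv \beta_4-\beta_1 \pmod 4$, using $-3\equiv 1 \pmod 4$. This is precisely the residue class singled out on the right below.

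Next I would rewrite the right-hand side of the proposition in terms of $\psi$. By the definition of $\psi_{\gamma_1,\gamma_2,\beta_3,\beta_4}^{\star}$ in Conjecture~\ref{numconj}, factoring the constant $8$ and the sign $(-1)^{\gamma_2}$ out of $\psi(i^k p)$ and comparing with the prefactor $2(-1)^{\gamma_2}$ shows that the $k$-th summand on the right-hand side equals $\tfrac{2}{8}(i^k)^{\beta_1-\beta_4}\psi(i^k p)=\tfrac14 (i^k)^{\beta_1-\beta_4}\psi(i^k p)$. Thus the right-hand side equals
$$
\frac14 \sum_{k=0}^{3} (i^k)^{\beta_1-\beta_4}\,\psi(i^k p).
$$
Applying \eqref{principle} with $x=p$ and $\alpha = \beta_4-\beta_1$, for which $i^{-\alpha k} = (i^k)^{\beta_1-\beta_4}$, this expression is exactly the projection $\sum_{m \equiv \beta_4-\beta_1 \bmod 4} \Coeff_{p^m}[\psi]\, p^m$ of $\psi$ onto the residue class $\beta_4-\beta_1 \bmod 4$. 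Here I would note that $\psi$ is a genuine power series in $p$ with invertible constant term, so the principle applies verbatim.

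It then remains to compare this projection with $\Coeff_{s^0}[\Phi]$ coefficient by coefficient. Both series are supported on the single residue class $\beta_4-\beta_1 \bmod 4$, so it suffices to match the coefficients of $p^{4n-\beta_1-3\beta_4}$ for all $n$. By Conjecture~\ref{numconj}, for every $n < \tfrac12(\beta_1-\beta_2)+2\beta_4$ we have $\Coeff_{s^0 p^{4n-\beta_1-3\beta_4}}[\Phi] = \Coeff_{p^{4n-\beta_1-3\beta_4}}[\psi]$, and the largest power of $p$ so covered is strictly below $p^{\beta_1-2\beta_2+5\beta_4}$. Hence the two series agree in all powers $p^m$ with $m < \beta_1-2\beta_2+5\beta_4$, and the discrepancy in the remaining higher powers is absorbed into the error term $O(p^{\beta_1-2\beta_2+5\beta_4})$, giving the claimed identity. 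I do not expect any genuine obstacle: the hypotheses $\beta_1\equiv\beta_2 \bmod 2$ and $\beta_3\geq\beta_4-3$ are inherited verbatim from Conjecture~\ref{numconj}, and the only points requiring care are tracking the congruence class $\beta_4-\beta_1 \bmod 4$ and the normalization constant $\tfrac14$ correctly.
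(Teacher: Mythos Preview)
Your argument is correct and is exactly the approach the paper takes: the authors state that the proposition ``follows at once'' from principle \eqref{principle}, and you have spelled out precisely that deduction --- identifying the residue class $\beta_4-\beta_1 \bmod 4$, recognizing the right-hand side as $\tfrac14\sum_k (i^k)^{\beta_1-\beta_4}\psi(i^kp)$, and matching coefficients up to the bound coming from Conjecture~\ref{numconj}. Nothing is missing.
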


\begin{proposition} \label{fixedc1prop}
Let $\star \in \{\cob,\elg\}$. Assume Conjecture \ref{conjell} is true when $\star = \elg$ and Conjecture \ref{conjcob} is true when $\star = \cob$. Let $S$ be a smooth projective surface with $b_1(S) = 0$, $p_g(S)>0$, and $K_S \neq 0$. Suppose the Seiberg-Witten basic classes of $S$ are $0$ and $K_S$. Let $H, c_1$ be chosen such that there are no rank 2 strictly Gieseker $H$-semistable sheaves on $S$ with first Chern class $c_1$. Then
\begin{align*} 
\sfZ_{S,c_1}^\star(p,u^\star) = 2\sum_{k=0}^3 (i^k)^{c_1^2 - \chi(\O_S)} \Bigg( \frac{1}{2} F_0^{\star}(i^k p,u^\star) \Bigg)^{\chi(\O_S)} \Bigg( 2 F_1^{\star}(i^k p,u^\star) \Bigg)^{K_S^2}.
\end{align*}
\end{proposition}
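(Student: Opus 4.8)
The plan is to unfold the definition of $\sfZ_{S,c_1}^{\star}$ as a sum over $c_2$ and feed in the assumed conjectures term by term. By definition $\sfZ_{S,c_1}^{\star}(p,u^\star) = \sum_{c_2} I_\star(c_2)\, p^{\vd(c_2)}$, where $I_{\elg}(c_2) = Ell^{\vir}(M_S^H(2,c_1,c_2))$ and $I_{\cob}(c_2) = \pi_*[M_S^H(2,c_1,c_2)]^{\vir}_{\Omega_*}$, and where $\vd(c_2) = 4c_2 - c_1^2 - 3\chi(\O_S)$ by \eqref{vdformula}. Writing $\psi_S(p,u^\star) := 8\,(\tfrac12 F_0^{\star}(p,u^\star))^{\chi(\O_S)}(2F_1^{\star}(p,u^\star))^{K_S^2}$, the notation of Section \ref{notsec} identifies this with the generating series of Conjectures \ref{conjell} and \ref{conjcob} (matching, e.g., the factor $2$ in the $K_S^2$-numerator of Conjecture \ref{conjell} with $2F_1^{\elg}$); under the standing assumptions those conjectures give $I_\star(c_2) = \Coeff_{p^{\vd(c_2)}} \psi_S(p,u^\star)$ for every admissible $c_2$.

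The key observation is that as $c_2$ ranges over $\Z$, $\vd(c_2)$ ranges over exactly one residue class modulo $4$, namely $n \equiv -c_1^2 - 3\chi(\O_S) \pmod 4$; moreover for $\vd(c_2) < 0$ both $I_\star(c_2)$ and $\Coeff_{p^{\vd(c_2)}}\psi_S$ vanish (the virtual cycle lives in a negative degree, while each $F_i^{\star}$, and hence $\psi_S$, is a constant times a power series in $p$ with no negative powers), so these terms drop out harmlessly. Hence $\sfZ_{S,c_1}^{\star}(p,u^\star) = \sum_{n \equiv \alpha \bmod 4} (\Coeff_{p^n}\psi_S)\, p^n$ with $\alpha := -c_1^2 - 3\chi(\O_S)$, which is precisely the residue-class extraction performed by the principle \eqref{principle}.

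Applying \eqref{principle} to $\psi = \psi_S$ with this $\alpha$ then yields $\sfZ_{S,c_1}^{\star} = \sum_{k=0}^{3} \tfrac{i^{-\alpha k}}{4}\, \psi_S(i^k p, u^\star)$. It remains only to simplify the prefactor: since $i^4 = 1$ we have $i^{-\alpha k} = (i^k)^{c_1^2 + 3\chi(\O_S)} = (i^k)^{c_1^2 - \chi(\O_S)}$, the last equality because $(i^k)^{4\chi(\O_S)} = 1$; substituting the explicit $\psi_S(i^k p, u^\star)$ together with $8/4 = 2$ produces exactly the claimed formula. This is the mechanism by which the proposition ``follows at once'' from \eqref{principle}.

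I expect the only real subtlety --- not a genuine obstacle --- to be the bookkeeping that justifies replacing the geometrically meaningful sum over $c_2$ by the full residue-class sum: one must confirm that the conjectural formula is invoked only where its hypotheses ($p_g(S)>0$, $K_S\neq 0$, Seiberg--Witten basic classes $0$ and $K_S$, and no strictly semistable sheaves in class $(c_1,c_2)$) hold for every $c_2$, and that the vanishing of both sides for $\vd<0$ permits extending the sum over $c_2$ to all of $\Z$ without change. The residue-class and $i$-power arithmetic --- in particular the reduction of $c_1^2+3\chi(\O_S)$ to $c_1^2-\chi(\O_S)$ modulo $4$ --- is the one place a sign or exponent error could creep in, so I would verify it carefully.
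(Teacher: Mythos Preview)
Your argument is correct and is precisely the one the paper has in mind: the authors state that the proposition ``follows at once'' from the root-of-unity extraction principle \eqref{principle}, and your unpacking --- identifying $\vd(c_2)=4c_2-c_1^2-3\chi(\O_S)$, extracting the residue class $\alpha\equiv -c_1^2-3\chi(\O_S)\pmod 4$, and reducing $(i^k)^{c_1^2+3\chi(\O_S)}$ to $(i^k)^{c_1^2-\chi(\O_S)}$ --- is exactly this. The bookkeeping you flag is handled by the hypothesis (no strictly semistable sheaves for \emph{any} $c_2$ with the given $c_1$) and by the fact that the $F_i^\star$ lie in $1+p\cdot(\text{power series})$, so $\psi_S$ has no negative powers of $p$.
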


In fact Conjecture \ref{numconj} can be used to generalize this proposition as follows (the analog for virtual $\chi_y$-genus is \cite[Prop.~6.6]{GK1}). 
\begin{proposition} \label{nonmingt}
Assume the strong form of Mochizuki's formula holds (Remark \ref{strongform}). Let $\star \in \{\cob,\elg\}$ and assume Conjecture \ref{numconj} is true for $\star$.
Let $S_0$ be a smooth projective surface with $b_1(S_0) = 0$, $p_g(S_0)>0$, and $K_{S_0} \neq 0$. Suppose the Seiberg-Witten basic classes of $S_0$ are $0$ and $K_{S_0}$. Suppose $S$ is obtained from $S_0$ by repeated blow-ups and denote the total transforms of the exceptional divisors by $E_1, \ldots, E_m$. Suppose that $K_S^2 \ge \chi(\O_S)-3$. Let $H, c_1$ be chosen such that there exist no rank 2 strictly Gieseker $H$-semistable sheaves on $S$ with first Chern class $c_1$. Then
\begin{align*}
&\sfZ_{S,c_1}^\star(p,u^\star) = \\
&2\sum_{k=0}^3 (i^k)^{c_1^2-\chi(\O_S)}\Bigg(\frac{1}{2} F^{\star}_0(i^k p,u^\star) \Bigg)^{\chi(\O_S)}\Bigg(2 F^{\star}_1(i^k p,u^\star) \Bigg)^{K_{S}^2} \prod_{j=1}^m\Bigg(1+(-1)^{c_1E_j} \frac{F^{\star}_1(i^k p,u^\star)}{F^{\star}_1(-i^k p,u^\star)} \Bigg).
\end{align*}
\end{proposition}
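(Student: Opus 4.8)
The plan is to run the argument behind Proposition \ref{fixedc1prop} on the blow-up $S$, the one genuinely new ingredient being a careful determination of the Seiberg--Witten basic classes of $S$ and their invariants. First I would identify these classes. Since the $E_j$ are total transforms of the exceptional divisors, we have $K_S = \pi^* K_{S_0} + \sum_{j=1}^m E_j$ with $E_i E_j = -\delta_{ij}$ and $E_j \cdot \pi^* D = 0$ for every class $D$ on $S_0$, where $\pi \colon S \to S_0$ is the composite blow-down. The blow-up formula for Seiberg--Witten invariants, together with the hypothesis that the only basic classes of $S_0$ are $0$ and $K_{S_0}$, then shows that the basic classes of $S$ (in Mochizuki's convention) are exactly $a_1 = \epsilon\, \pi^* K_{S_0} + E_T$ with $\epsilon \in \{0,1\}$, $T \subseteq \{1,\dots,m\}$, $E_T := \sum_{j \in T} E_j$, each with $\SW(a_1) = 1$. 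A direct computation gives $a_1^2 = a_1 K_S = \epsilon K_{S_0}^2 - |T|$ and $a_1 c_1 = \epsilon\, K_{S_0}\cdot \pi_* c_1 + \sum_{j \in T} c_1 E_j$; moreover the involution $a_1 \mapsto K_S - a_1$ sends $(\epsilon, T)$ to $(1-\epsilon, T^{c})$, so the $2^{m+1}$ basic classes split into $2^m$ conjugate pairs with no fixed points, and I may take $a_1 = E_T$ as the representative of each pair.

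Next I would feed this into the localisation machinery. By the strong form of Mochizuki's formula (Remark \ref{strongform}), both $\pi_*[M]^{\vir}_{\Omega_*}$ and $Ell^{\vir}(M)$ equal $\Coeff_{s^0 p^{c_2}}$ of $\sum_{a_1 \in H^2(S,\Z)} \SW(a_1)\, \sfA^{\star}_{(a_1^2, a_1 c_1, c_1^2, a_1 K_S, c_1 K_S, K_S^2, \chi(\O_S))}(s,p,u^\star)$; since $\SW$ vanishes off the basic classes, the sum is effectively over the classes found above. Forming $\sfZ^{\star}_{S,c_1}$ and summing the weight $p^{\vd} = p^{4c_2 - c_1^2 - 3\chi(\O_S)}$ over $c_2$ replaces each $\sfA^{\star}(s,p,u^\star)$ by $p^{-c_1^2 - 3\chi(\O_S)}\sfA^{\star}(s,p^4,u^\star)$, which is precisely the shape occurring in Conjecture \ref{numconj}. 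Grouping the sum into conjugate pairs and using the conjugation symmetry $\SW(K_S - a_1) = (-1)^{\chi(\O_S)}\SW(a_1)$, the pair represented by $a_1 = E_T$ contributes exactly the combination $\sfA^{\star}_{(\gamma_1,\gamma_2,\beta_1,\gamma_1,\beta_2,\beta_3,\beta_4)} + (-1)^{\beta_4}\sfA^{\star}_{(\beta_3-\gamma_1,\beta_2-\gamma_2,\beta_1,\beta_3-\gamma_1,\beta_2,\beta_3,\beta_4)}$ of that conjecture, with $(\beta_1,\beta_2,\beta_3,\beta_4) = (c_1^2, c_1 K_S, K_S^2, \chi(\O_S))$, $\gamma_1 = -|T|$, and $\gamma_2 = \sum_{j \in T} c_1 E_j$. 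The hypothesis $K_S^2 \geq \chi(\O_S) - 3$ is exactly $\beta_3 \geq \beta_4 - 3$, and $\beta_1 \equiv \beta_2 \bmod 2$ holds automatically by Riemann--Roch, so Conjecture \ref{numconj} applies.

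I would then invoke Conjecture \ref{numconj} followed by the extraction principle \eqref{principle} to pass from the variable $p^4$ back to a sum over $i^k p$, exactly as in the derivation of Proposition \ref{fixedc1prop}. This shows that the pair indexed by $T$ contributes
\[
2 \sum_{k=0}^3 (i^k)^{c_1^2 - \chi(\O_S)} \Big(\tfrac{1}{2} F_0^{\star}(i^k p, u^\star)\Big)^{\chi(\O_S)} \Big(2 F_1^{\star}(i^k p, u^\star)\Big)^{K_S^2} (-1)^{\gamma_2} \Big(\frac{F_1^{\star}(-i^k p, u^\star)}{F_1^{\star}(i^k p, u^\star)}\Big)^{\gamma_1}.
\]
Substituting $\gamma_1 = -|T|$ and $\gamma_2 = \sum_{j \in T} c_1 E_j$ rewrites the last two factors as $\prod_{j \in T}\big((-1)^{c_1 E_j} F_1^{\star}(i^k p, u^\star)/F_1^{\star}(-i^k p, u^\star)\big)$. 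Summing over all $T \subseteq \{1,\dots,m\}$ and using $\sum_{T}\prod_{j \in T} x_j = \prod_{j=1}^m (1 + x_j)$ collapses the $T$-sum into the product $\prod_{j=1}^m\big(1 + (-1)^{c_1 E_j} F_1^{\star}(i^k p, u^\star)/F_1^{\star}(-i^k p, u^\star)\big)$, which yields the asserted formula.

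I expect two points to require the most care. The delicate bookkeeping is the first step: pinning down the basic classes of the blow-up, checking the identity $a_1^2 = a_1 K_S$ for each of them — this is what matches a conjugate pair to the special index pattern $(\gamma_1,\gamma_2,\beta_1,\gamma_1,\dots)$ of Conjecture \ref{numconj} — and getting the conjugation sign $(-1)^{\chi(\O_S)}$ correct so that it cancels against the $(-1)^{\beta_4}$ in the conjecture. The second, more technical issue is that Conjecture \ref{numconj} only controls the coefficient of $p^{\vd}$ for $\vd < c_1^2 - 2 c_1 K_S + 5\chi(\O_S)$; to upgrade this to an identity of full generating functions I would use the standard device of tensoring by $\O_S(tH)$, which replaces $c_1$ by $c_1 + 2tH$. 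This leaves $\vd$ and the moduli space unchanged and, since it alters $c_1^2$ by a multiple of $4$ and each $c_1 E_j$ by an even integer, leaves both sides of the claimed identity unchanged, while pushing the range of validity of Conjecture \ref{numconj} past any prescribed $\vd$.
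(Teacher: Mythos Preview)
Your approach is essentially the one the paper has in mind: identify the Seiberg--Witten basic classes of the blow-up via \cite[Thm.~7.4.6]{Mor}, group them into conjugate pairs indexed by subsets $T\subseteq\{1,\dots,m\}$, apply Conjecture \ref{numconj} to each pair, and then sum over $T$ using $\sum_T\prod_{j\in T}x_j=\prod_j(1+x_j)$. The paper's own proof records exactly the description of the basic classes and then defers to the analogous computation in \cite[Prop.~6.6]{GK1}, so your write-up is a faithful expansion of that sketch.

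One slip: you assert that \emph{every} basic class has $\SW(a_1)=1$, but this is false when $\chi(\O_S)$ is odd. The classes $E_T$ have $\SW=1$, while their conjugates $K_S-E_T=\pi^*K_{S_0}+E_{T^c}$ have $\SW=(-1)^{\chi(\O_S)}$, exactly as the paper states. You in fact use the correct value two sentences later when you invoke the conjugation symmetry $\SW(K_S-a_1)=(-1)^{\chi(\O_S)}\SW(a_1)$ to produce the $(-1)^{\beta_4}$ in Conjecture \ref{numconj}, so the computation goes through; just fix the earlier sentence so the exposition is internally consistent.
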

\begin{proof}
Let $M:=\{1, \ldots, m\}$ and write $E_I = \sum_{i \in I} E_i$ for any $I \subset M$. Then $K_S = K_{S_0} + E_M$ and $\chi(\O_S) = \chi(\O_{S_0})$. The SW basic classes of $S$ are $E_I$ (with SW invariant 1) and $K_{S_0} + E_I = K_S - E_{M \setminus I}$ (with SW invariant $(-1)^{\chi(\O_S)}$), where $I$ runs over all subsets of $M$ \cite[Thm.~7.4.6]{Mor}. The rest is a short calculation (as for \cite[Prop.~6.6]{GK1}).
\end{proof}

\subsection{Arbitrary surfaces with holomorphic 2-form}

The following conjecture generalizes \cite[Conj.~6.7]{GK1}. This conjecture can be seen as a refinement of (part of) a formula of Dijkgraaf-Park-Schroers \cite[Eqn.~(6.1), lines 2+3]{DPS}.
\begin{conjecture}\label{generalsurfconj}
Let $\star \in \{\cob,\elg\}$ and let $S$ be a smooth projective surface with $b_1(S) = 0$ and $p_g(S)>0$. Let $H,c_1,c_2$ be chosen such that there are no rank 2 strictly Gieseker $H$-semistable sheaves on $S$ with first Chern class $c_1$. For $M:=M_S^H(2,c_1,c_2)$, the coefficient of $p^{\vd(M)}$ of $\sfZ^\star_{S,c_1}(p,u^\star)$ equals the coefficient of $p^{\vd(M)}$ of
\begin{align*}
\psi_{S,c_1}^\star(p,u^\star) := 4 \Bigg(\frac{1}{2}F_0^{\star}(p,u^\star)\Bigg)^{\chi(\O_S)}\Bigg(2F_1^{\star}(p,u^\star)\Bigg)^{K_{S}^2}  \sum_{a \in H^2(S,\Z)} \SW(a)(-1)^{c_1 a} \Bigg(\frac{F_1^{\star}(-p,u^\star)}{F_1^{\star}(p,u^\star)}\Bigg)^{a K_S}.
\end{align*}
\end{conjecture}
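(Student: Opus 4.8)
The plan is to derive the statement from the strong form of Mochizuki's formula (Remark \ref{strongform}) together with Conjecture \ref{numconj}, following the pattern of Propositions \ref{fixedc1prop} and \ref{nonmingt} and of \cite[Sect.~6]{GK1}. First I would rewrite the generating function. Since $\vd(M) = 4c_2 - c_1^2 - 3\chi(\O_S)$, summing the strong form of Corollary \ref{univcor} over $c_2$ with weight $p^{\vd}$ and substituting $p \mapsto p^4$ yields
\begin{align*}
\sfZ_{S,c_1}^\star(p,u^\star) = \Coeff_{s^0}\Bigg[ p^{-c_1^2 - 3\chi(\O_S)} \sum_{a \in H^2(S,\Z)} \SW(a)\, \sfA_{(a^2,\,a c_1,\,c_1^2,\,a K_S,\,c_1 K_S,\,K_S^2,\,\chi(\O_S))}^{\star}(s,p^4,u^\star) \Bigg].
\end{align*}
Because both $\sfZ_{S,c_1}^\star$ and $\psi_{S,c_1}^\star$ are invariant under $c_1 \mapsto c_1 + 2tH$ (on the sheaf side this is tensoring by $\O_S(tH)$, which preserves $\vd$; on the formula side $(-1)^{c_1 a}$ is unchanged), I may twist by $t \gg 0$ to ensure the inequality $c_2 < \tfrac12 c_1(c_1 - K_S) + 2\chi(\O_S)$ of condition (i) of Corollary \ref{univcor}, which is exactly the range $n < \tfrac12(\beta_1-\beta_2)+2\beta_4$ in which Conjecture \ref{numconj} applies.

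The key structural input is that only Seiberg--Witten basic classes contribute and that for a surface with $p_g(S)>0$ (hence $b_2^+(S) = 2p_g(S)+1 > 1$) every such class is constrained. By the Seiberg--Witten dimension formula the standard basic classes $b$ satisfy $b^2 = K_S^2$; via Mochizuki's convention $\SW(a) = \widetilde{\SW}(2a - K_S)$ this translates into $a^2 = a K_S$ for every $a$ with $\SW(a) \neq 0$. Setting $\gamma_1 := a^2 = a K_S$, $\gamma_2 := a c_1$, and $(\beta_1,\beta_2,\beta_3,\beta_4) := (c_1^2, c_1 K_S, K_S^2, \chi(\O_S))$, the subscript of the term indexed by $a$ becomes $(\gamma_1,\gamma_2,\beta_1,\gamma_1,\beta_2,\beta_3,\beta_4)$, matching the first summand of Conjecture \ref{numconj}; a direct check shows that $K_S - a$ produces precisely the second summand $(\beta_3-\gamma_1,\beta_2-\gamma_2,\beta_1,\beta_3-\gamma_1,\beta_2,\beta_3,\beta_4)$, again using $a^2 = aK_S$.

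Next I would pair the classes under the involution $a \mapsto K_S - a$. Using the charge-conjugation relation $\SW(K_S - a) = (-1)^{\chi(\O_S)} \SW(a)$, each pair assembles into exactly the bracketed combination of Conjecture \ref{numconj}, multiplied by $\SW(a)$ and by the prefactor $p^{-\beta_1 - 3\beta_4} = p^{-c_1^2 - 3\chi(\O_S)}$ already present above. Applying Conjecture \ref{numconj} replaces each pair by $\Coeff_{p^{\vd}}$ of $8 (\tfrac12 F_0^\star)^{\chi(\O_S)} (2 F_1^\star)^{K_S^2} (-1)^{a c_1} (F_1^\star(-p,u^\star)/F_1^\star(p,u^\star))^{a K_S}$, where I rewrote the exponent via $\gamma_1 = a K_S$. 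It then remains to reconcile this with the target, which carries a factor $4$ and an unrestricted sum over $a$. Writing $R := F_1^\star(-p,u^\star)/F_1^\star(p,u^\star)$ and $G := (\tfrac12 F_0^\star)^{\chi(\O_S)}(2F_1^\star)^{K_S^2}$, one has that $F_0^\star$ is even in $p$, that $R \mapsto R^{-1}$, and that $G(-p) = G(p) R^{K_S^2}$ under $p \mapsto -p$; combined with $\Coeff_{p^\vd} H(-p) = (-1)^{\vd} \Coeff_{p^\vd} H(p)$ this shows the contributions of $a$ and of $K_S - a$ to the unrestricted sum agree provided $\chi(\O_S) + c_1 K_S + \vd$ is even. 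This parity holds because $\vd \equiv c_1^2 + \chi(\O_S)$ and $c_1^2 \equiv c_1 K_S \pmod 2$ by the adjunction formula, so that $8 \sum_{\text{pairs}} = 4 \sum_{a}$ term by term and one obtains exactly $\Coeff_{p^\vd}(\psi_{S,c_1}^\star)$.

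The main obstacle is twofold. Conceptually, the argument is conditional on the two unproven inputs---the strong form of Mochizuki's formula and Conjecture \ref{numconj}---which is why the statement is phrased as a conjecture; the content one actually verifies numerically (Section \ref{versec}) is Conjecture \ref{numconj}. Technically, within the derivation the delicate point is exactly the normalization step above: matching the factor $8$ against $4$ and converting the sum over pairs into the unrestricted sum hinges on the parity identity, and the self-paired class $a = K_S/2$ (which for $p_g(S) > 0$ forces $K_S^2 = 0$, as in the $K3$ case) must be handled separately to confirm consistency with the same formula.
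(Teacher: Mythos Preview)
The paper does not give a proof of this statement: Conjecture \ref{generalsurfconj} is stated as a conjecture and is only verified numerically in examples via Corollary \ref{univcor} (Section \ref{versec}). In particular, the paper nowhere claims that Conjecture \ref{numconj} together with the strong form of Mochizuki's formula implies Conjecture \ref{generalsurfconj} in the generality stated, so there is no ``paper's own proof'' to compare against.

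Your conditional derivation is largely along the right lines and is a genuine extension of the argument behind Proposition \ref{nonmingt} to arbitrary Seiberg--Witten basic classes: the simple-type input $a^2 = aK_S$ for basic classes on a surface with $p_g>0$, the pairing $a \leftrightarrow K_S - a$, the match with the two summands in Conjecture \ref{numconj}, and the parity reconciliation of the prefactors $8$ versus $4$ are all correct. However, there is a real gap. Conjecture \ref{numconj} carries the standing hypothesis $\beta_3 \geq \beta_4 - 3$, i.e.\ $K_S^2 \geq \chi(\O_S) - 3$, and you never address it. This inequality genuinely fails for many surfaces with $b_1=0$, $p_g>0$: for the elliptic surfaces $E(n)$ with $n \geq 4$ one has $K_S^2 = 0$ and $\chi(\O_S) = n$, and these are precisely among the surfaces for which the paper verifies Conjecture \ref{generalsurfconj} directly in Section \ref{versec}. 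Your twisting trick $c_1 \mapsto c_1 + 2tH$ fixes the bound on $n$ but cannot move $K_S^2$ or $\chi(\O_S)$. So what your argument actually yields is: assuming strong Mochizuki and Conjecture \ref{numconj}, Conjecture \ref{generalsurfconj} holds for surfaces with $K_S^2 \geq \chi(\O_S) - 3$. That is strictly weaker than the full conjecture, and the paper accordingly keeps Conjecture \ref{generalsurfconj} as an independent statement rather than a corollary of Conjecture \ref{numconj}.
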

If there are no strictly Gieseker $H$-semistable sheaves with first Chern class $c_1$, this conjecture implies (using \eqref{principle})
$$
\sfZ_{S,c_1}^{\star}(p,u^\star) = \frac{1}{2} \psi_{S,c_1}^{\star}(p,u^\star) + \frac{1}{2} i^{c_1^2 - \chi(\O_S)} \psi_{S,c_1}^{\star}(ip,u^\star).
$$
\begin{remark}
A simple computation shows that this conjecture implies both Propositions \ref{fixedc1prop} (without assuming Conjectures \ref{conjell}, \ref{conjcob}) and \ref{nonmingt} (without assuming Conjecture \ref{numconj} and without assuming $\chi(\O_S) \geq K_S^2-3$). In fact, this conjecture implies both Conjectures \ref{conjell} and \ref{conjcob}.
\end{remark}

The first application of Conjecture \ref{generalsurfconj} is the following blow-up formula. The analog for virtual $\chi_y$-genus is \cite[Prop.~6.9]{GK1}. The proof follows immediately from the description of the Seiberg-Witten basic classes and invariants of a blow-up \cite[Thm.~7.4.6]{Mor}.
\begin{proposition} \label{blowupcor}
Let $\star \in \{\cob,\elg\}$. Assume Conjecture \ref{generalsurfconj} holds for $\star$. Let $\pi : \widetilde{S} \rightarrow S$ be the blow-up in a point of a  smooth projective surface $S$ with $b_1(S) = 0$, $p_g(S)>0$. Suppose $H, c_1$ are chosen such that there are no rank 2 strictly Gieseker $H$-semistable sheaves on $S$ with first Chern class $c_1$. Let $\widetilde{c}_1 = \pi^* c_1 - \epsilon E$ with $\epsilon=0,1$ and suppose $\widetilde{H}$ is a polarization on $\widetilde{S}$ such that there are no rank 2 strictly Gieseker $\widetilde{H}$-semistable sheaves on $\widetilde{S}$ with first Chern class $\widetilde{c}_1$. Then
\begin{align}
\begin{split} \label{blowupform}
\sfZ_{\widetilde S,\widetilde c_1}^{\star}(p,u^\star)&=\frac{1}{2} \psi^\star_{\widetilde S,\widetilde c_1}(p,u^\star)+ \frac{1}{2} i^{\widetilde{c}_1^2-\chi(\O_{\widetilde{S}})} \psi^\star_{\widetilde S,\widetilde c_1}\big(ip,u^\star\big), \\
\psi^\star_{\widetilde S,\widetilde c_1}(p,u^\star)&=\frac{1}{2}\Big(F_1^\star(p,u^\star)^{-1} +(-1)^\epsilon F_1^\star(-p,u^\star)^{-1}\Big) \psi^\star_{S,c_1}(p,u^\star).
\end{split}
\end{align}
\end{proposition}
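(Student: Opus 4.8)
The plan is to apply Conjecture \ref{generalsurfconj} to the blow-up $\widetilde S$ and then collapse the resulting sum over $H^2(\widetilde S,\Z)$ onto the sum over $H^2(S,\Z)$ that defines $\psi^\star_{S,c_1}$. First I would record the standard numerical data of a single blow-up: $K_{\widetilde S} = \pi^* K_S + E$, $\chi(\O_{\widetilde S}) = \chi(\O_S)$, $K_{\widetilde S}^2 = K_S^2 - 1$, together with the intersection rules $\pi^*\alpha \cdot \pi^*\beta = \alpha\beta$, $\pi^*\alpha \cdot E = 0$, and $E^2 = -1$. The essential input is the Seiberg-Witten blow-up formula \cite[Thm.~7.4.6]{Mor}: translated into Mochizuki's convention $\SW(\cdot) = \widetilde{\SW}(2(\cdot) - K)$ of Section \ref{intro}, the basic classes of $\widetilde S$ are exactly $\pi^* a$ and $\pi^* a + E$ as $a$ ranges over the basic classes of $S$, and both satisfy $\SW_{\widetilde S}(\pi^* a) = \SW_{\widetilde S}(\pi^* a + E) = \SW_S(a)$. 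Indeed, the shift sends $\pi^* a$ to $\pi^*(2a - K_S) - E$ and $\pi^* a + E$ to $\pi^*(2a - K_S) + E$, and both of these carry the invariant $\widetilde{\SW}_S(2a - K_S)$ by the usual formula.

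Next I would substitute this into Conjecture \ref{generalsurfconj} for $(\widetilde S, \widetilde c_1)$, grouping the two lifts of each basic class $a$. With $\widetilde c_1 = \pi^* c_1 - \epsilon E$ one finds $\widetilde c_1 \cdot \pi^* a = c_1 a$ but $\widetilde c_1 \cdot (\pi^* a + E) = c_1 a + \epsilon$, so the sign $(-1)^{\widetilde c_1 \widetilde a}$ acquires an extra $(-1)^\epsilon$ on the second lift; similarly $\pi^* a \cdot K_{\widetilde S} = a K_S$ whereas $(\pi^* a + E)\cdot K_{\widetilde S} = a K_S - 1$, so the second lift carries an extra factor $F_1^\star(p,u^\star)/F_1^\star(-p,u^\star)$. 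Using $K_{\widetilde S}^2 = K_S^2 - 1$ to split off one factor $(2 F_1^\star(p,u^\star))^{-1}$ from the prefactor $(2F_1^\star(p,u^\star))^{K_{\widetilde S}^2}$, each pair of lifts contributes $\tfrac{1}{2 F_1^\star(p,u^\star)}\big(1 + (-1)^\epsilon F_1^\star(p,u^\star)/F_1^\star(-p,u^\star)\big)$ times the summand $\SW_S(a)(-1)^{c_1 a}(F_1^\star(-p,u^\star)/F_1^\star(p,u^\star))^{a K_S}$ occurring in $\psi^\star_{S,c_1}$.

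Since this scalar factor is independent of $a$, it factors out of the sum, and comparison with the formula of Conjecture \ref{generalsurfconj} for $(S,c_1)$ gives the second line of \eqref{blowupform} after simplifying $\tfrac{1}{2 F_1^\star(p,u^\star)}\big(1 + (-1)^\epsilon F_1^\star(p,u^\star)/F_1^\star(-p,u^\star)\big) = \tfrac12\big(F_1^\star(p,u^\star)^{-1} + (-1)^\epsilon F_1^\star(-p,u^\star)^{-1}\big)$. The first line of \eqref{blowupform} is then immediate: it is precisely the general consequence of Conjecture \ref{generalsurfconj} recorded just after its statement, applied to the pair $(\widetilde S, \widetilde c_1)$ so that the exponent of $i$ becomes $\widetilde c_1^2 - \chi(\O_{\widetilde S})$. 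The only genuinely delicate step is the second one: getting the Seiberg-Witten blow-up formula correct in Mochizuki's shifted convention and verifying that $\pi^* a$ and $\pi^* a + E$ exhaust the basic classes of $\widetilde S$ with the stated invariants. Once that dictionary is fixed, the remainder is routine bookkeeping of intersection numbers.
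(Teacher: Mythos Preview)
Your proposal is correct and follows exactly the approach the paper indicates: the paper's proof is a single sentence referring to the Seiberg-Witten blow-up formula \cite[Thm.~7.4.6]{Mor}, and you have carefully unpacked that reference, including the translation into Mochizuki's shifted convention and the intersection-number bookkeeping. There is nothing to add.
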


 In \cite{LQ1, LQ2}, Li-Qin derive a formula for the virtual Hodge polynomials of a blow-up. Here ``virtual'' is meant in the sense of Deligne's weight filtration, not virtual classes. Interestingly the $\chi_{y}^{\vir}$-specialization of \eqref{blowupform} coincides with the $\chi_y$-specialization of the virtual Hodge polynomials of Li-Qin \cite[Prop.~6.9]{GK1}.

The second (more involved) application of Conjecture \ref{generalsurfconj} is to surfaces with $b_1=0$, $p_g>0$, and canonical divisor with irreducible reduced connected components. The following proposition is proved in the same way as \cite[Prop.~6.11]{GK1} and refines a formula of Vafa-Witten \cite[Eqn.~(5.45)]{VW}. 
\begin{proposition} \label{propdisconn}
Let $\star \in \{\cob,\elg\}$. Let $S$ be a smooth projective surface with $b_1(S) = 0$ and $p_g(S)>0$. Suppose $|K_S|$ contains a reduced curve whose connected components $C_1, \ldots, C_m$ are irreducible. Let $N_{C_j/S}$ denote the normal bundles of $C_j \subset S$. Let $H,c_1$ be chosen such that there are no rank 2 strictly Gieseker $H$-semistable sheaves with first Chern class $c_1$. Then 
\begin{align*} 
&\sfZ_{S,c_1}^\star(p,u^\star) = 2 \Bigg(\frac{1}{2} F_0^\star(p,u^\star) \Bigg)^{\chi(\O_S)} \prod_{j=1}^{m} \Bigg( (2F^\star_1(p,u^\star))^{C_j^2} + (-1)^{c_1 C_j + h^0(N_{C_j/S})} (2F^\star_1(-p,u^\star))^{C_j^2} \Bigg) \\
&+ 2(-i)^{c_1^2 - \chi(\O_S)} \Bigg(\frac{1}{2} F_0^\star(-ip,u^\star)\Bigg)^{\chi(\O_S)}  \prod_{j=1}^{m} \Bigg( (2F^\star_1(-ip,u^\star))^{C_j^2} + (-1)^{c_1 C_j + h^0(N_{C_j/S})} (2F^\star_1(ip,u^\star))^{C_j^2} \Bigg).
\end{align*}
\end{proposition}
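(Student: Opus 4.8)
The plan is to feed the Seiberg--Witten data of $S$ into Conjecture \ref{generalsurfconj} and then to extract the product structure by exploiting the disjointness of the components. The essential geometric input, which I would import exactly as in the proof of \cite[Prop.~6.11]{GK1}, is the description of the Seiberg--Witten basic classes of $S$ when $|K_S|$ contains a reduced curve $C = C_1 \sqcup \cdots \sqcup C_m$ with irreducible components: in the convention of Section \ref{intro} the basic classes are precisely the partial sums
\[
C_J := \sum_{j \in J} C_j, \qquad J \subseteq \{1,\ldots,m\},
\]
with $\SW(C_J) = \prod_{j \in J} (-1)^{h^0(N_{C_j/S})}$ (this rests on \cite[Thm.~7.4.6]{Mor} and the identification of $N_{C_j/S}$ with a theta characteristic on $C_j$). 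I would begin by recording this, together with the adjunction computation: since the $C_j$ are disjoint, $K_S|_{C_j} = C_j|_{C_j}$, whence $N_{C_j/S} \cong \tfrac{1}{2}K_{C_j}$ and $C_j^2 = g(C_j)-1$.

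Next I would substitute into the expression for $\psi_{S,c_1}^\star$ in Conjecture \ref{generalsurfconj}. Disjointness gives $K_S^2 = \sum_j C_j^2$, $C_J K_S = \sum_{j\in J} C_j^2$, and $c_1 C_J = \sum_{j\in J} c_1 C_j$, so every exponential splits into a product indexed by $j$ and the sum over subsets $J$ collapses:
\[
\sum_{J} \SW(C_J)(-1)^{c_1 C_J}\Big(\tfrac{F_1^\star(-p,u^\star)}{F_1^\star(p,u^\star)}\Big)^{C_J K_S}
= \prod_{j=1}^m \Big(1 + (-1)^{c_1 C_j + h^0(N_{C_j/S})}\Big(\tfrac{F_1^\star(-p,u^\star)}{F_1^\star(p,u^\star)}\Big)^{C_j^2}\Big).
\]
Multiplying by $(2F_1^\star(p,u^\star))^{K_S^2} = \prod_j (2F_1^\star(p,u^\star))^{C_j^2}$ turns the $j$-th factor into $(2F_1^\star(p,u^\star))^{C_j^2} + (-1)^{c_1 C_j + h^0(N_{C_j/S})}(2F_1^\star(-p,u^\star))^{C_j^2}$, so $\psi_{S,c_1}^\star(p,u^\star) = 4(\tfrac{1}{2}F_0^\star(p,u^\star))^{\chi(\O_S)}$ times the product appearing in the statement.

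Finally I would apply the identity $\sfZ_{S,c_1}^\star(p,u^\star) = \tfrac{1}{2}\psi_{S,c_1}^\star(p,u^\star) + \tfrac{1}{2}i^{c_1^2-\chi(\O_S)}\psi_{S,c_1}^\star(ip,u^\star)$ stated just after Conjecture \ref{generalsurfconj}. The first summand immediately produces the first line of the claimed formula. For the second I would use that $F_0^\star$ is even in $p$ (so $F_0^\star(ip)=F_0^\star(-ip)$) and, in each factor of the product, pull out $\eta_j := (-1)^{c_1 C_j + h^0(N_{C_j/S})}$ via $\eta_j^2 = 1$ to interchange $F_1^\star(ip)$ and $F_1^\star(-ip)$; this generates a global factor $\prod_j \eta_j$ and converts each factor into the form with arguments $-ip$, $ip$ appearing in the statement. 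The only remaining bookkeeping is the sign identity $i^{c_1^2-\chi(\O_S)}\prod_j \eta_j = (-i)^{c_1^2-\chi(\O_S)}$, equivalently $\prod_j \eta_j = (-1)^{c_1^2-\chi(\O_S)}$.

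I would establish this parity relation as follows. Since $c_1^2 \equiv c_1 K_S \pmod 2$ by Riemann--Roch and $\sum_j c_1 C_j = c_1 K_S$, the claim reduces to $\sum_j h^0(N_{C_j/S}) \equiv \chi(\O_S) \pmod 2$. Twisting the defining sequence of $C$ by $\O_S(C) = \omega_S$ and using disjointness to write $\O_C(C) = \bigoplus_j N_{C_j/S}$ yields
\[
0 \longrightarrow \O_S \longrightarrow \omega_S \longrightarrow \bigoplus_{j=1}^m N_{C_j/S} \longrightarrow 0 .
\]
Because $h^1(\O_S)=0$, taking cohomology gives $\sum_j h^0(N_{C_j/S}) = p_g - 1 = \chi(\O_S) - 2$, which has the required parity. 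The main obstacle is therefore not in the manipulation of the conjectural formula but in the Seiberg--Witten input of the first paragraph; granting that computation (available through \cite{Mor} and reproduced for the virtual $\chi_y$-genus in \cite[Prop.~6.11]{GK1}), the proof reduces entirely to the disjointness-driven factorization and the parity identity above.
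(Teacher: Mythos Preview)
Your proof is correct and follows essentially the same route as the paper, which likewise feeds the Seiberg--Witten data (quoted from \cite[Lem.~6.14]{GK1}) into Conjecture \ref{generalsurfconj} and factorizes the sum over subsets; you supply more detail on the parity identity than the paper, which simply defers to \cite[Prop.~6.11]{GK1}. One small imprecision worth noting: the paper's formula is $\SW(C_I) = |[I]|\prod_{i\in I}(-1)^{h^0(N_{C_i/S})}$, where $|[I]|$ counts subsets $J$ with $C_J$ linearly equivalent to $C_I$, but since you sum over all subsets $J$ rather than over cohomology classes this multiplicity is automatically absorbed and your computation is unaffected.
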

\begin{proof}
The Seiberg-Witten basic classes and invariants of $S$ can be described as follows \cite[Lem.~6.14]{GK1}. For  $I \subset M:=\{1, \ldots,m\}$, let $C_I:=\sum_{i \in I} C_i$, where $I = \varnothing$ corresponds to the zero divisor. Next, define an equivalence relation $I \sim J$, when $C_I$ and $C_J$ are linearly equivalent. Then the SW basic classes of $S$ are $\{C_I \in H^2(S,\Z)\}_{I \subset M}$ and
$$
\SW(C_I) = |[I]| \prod_{i \in I} (-1)^{h^0(N_{C_i/S})},
$$ 
where $|[I]|$ is the number of elements of the equivalence class of $I$ and $N_{C_i / S}$ is the normal bundle of $C_i$. The rest of the proof is an easy calculation as for \cite[Prop.~6.11]{GK1}.
\end{proof}

\section{Verification of the conjectures in examples} \label{versec}

In this section we use Corollary \ref{univcor} in order to verify Conjectures \ref{conjell}, \ref{conjcob}, \ref{numconj}, \ref{generalsurfconj} in a number of examples. In Section \ref{toricsec} we mentioned that we have determined the universal functions $A_i^{\star}$ up to the following orders: 
\begin{itemize}
\item $A_i^{\elg}$ up to order $p^i q^j$, where $i\le 6$ and $j\le 14$.
\item $A_i^{\cob}$ up to order $p^6$ with the specialization $v_i=0$ for $i\ge 6$.
\end{itemize}
Using the methods of \cite{EGL} we have determined the cobordism class of $K3^{[n]}$ for $n\le 7$. This determines 
$$
F_0^{\cob}(p,\bfv) \mod p^{16}.
$$ 
We use this in the verifications of the conjectures for $\sfZ_{S,c_1}^{\cob}(p,\bf{v})$. Assuming Conjecture \ref{conjcob} in a special case, see Remark \ref{K3bl}, we determine 
$$
F_1^{\cob}(p,\bfv) = L(p,\bfv)|_{v_6=v_7=\cdots=0} \mod p^{14}.
$$
We use this as our definition of $L(p,\bfv)$ in our verifications of the conjectures for $\sfZ_{S,c_1}^{\cob}(p,\bf{v})$.

\subsection{$K3$ surfaces and their blow-ups}

For a $K3$ surface $S$ the only Seiberg-Witten basic class is $0$. Let $H, c_1$ be chosen such that $c_1H>0$ is odd. We put $c:=c_1^2\in 2\Z$. For $2\le c\le 14$ even and $\star = \cob, \elg$, we determined
$$
\sfZ_{S,c_1}^\star(p,u^\star) \mod p^{\min\{c+10,22-c\}}.
$$ 
The orders of $u^\star$ are determined by the orders up to which we calculated $A_i^\star$ as mentioned at the beginning of this section. We suppress these orders throughout this section. Conjecture \ref{generalsurfconj} is confirmed in this range.

For $S,H,c_1$ as above, let $\pi:\widetilde S\to S$ be the blow-up of $S$ in a point and denote the exceptional divisor by $E$. Let $\widetilde c_1=\pi^*c_1+\epsilon E$, and $\widetilde H=rH-E$ with $r\gg 0$ and $r+\epsilon$
odd. For $\star  = \cob,\elg$, $\epsilon=0,1$, and $2\le c\le 14$ even, we determined
$$
\sfZ_{\widetilde S,\widetilde c_1}^{\star}(p,u^\star) \mod p^{\min\{c+2\epsilon+10,22-c+\epsilon\}}.
$$
Conjectures \ref{conjell} and \ref{conjcob} are verified in this range.

\begin{remark} \label{K3bl} 
\hfill
\begin{enumerate}
\item Let $\widetilde{S}$ be the blow-up of an elliptic $K3$ surface with section and 24 nodal singular fibres (and no further singular fibres). Denote the (pull-back of) a section and fibre class on $K3$ by $B, F$ respectively.  As before we denote the exceptional divisor by $E$. Assume Conjecture \ref{conjcob} is true for $\widetilde{S}$, $\widetilde{H}$, and $\widetilde{c}_1=B, B+F,B+E, B+F+E$. 
Then\footnote{In the sum on RHS, for each of the choices of $\widetilde{c}_1$, we choose a possibly different polarization $\widetilde{H}$ such that there are no rank 2 strictly Gieseker $\widetilde{H}$-semistable sheaves on $\widetilde{S}$ with first Chern class $\widetilde{c}_1$.}
$$L(p,\bfv)^{-1}=\frac{1}{F_0^{\cob}(p,\bfv)^2}\sum_{\widetilde{c}_1=B, B+F,B+E, B+F+E} \sum_{\widetilde{c}_2} \pi_* [M^{\widetilde{H}}_{\widetilde{S}}(2,\widetilde{c}_1,\widetilde{c}_2)]^{\vir}_{\Omega_*} \, p^{4\widetilde{c}_2-\widetilde{c}_1^2-6}.$$ 

\item Therefore, using the above notation and assuming Conjecture \ref{conjcob} for $\widetilde{S}$ with $c=4,6$, $\epsilon=0,1$, we determine the  $L(p,\bfv)|_{v_6=v_7=\ldots=0}$ modulo $p^{14}$.
In particular we find
\begin{align*}
L(p,\bfv)^{-1}&=1 + 2v_1p - 16v_3p^3+ 4(v_1^4-3v_2v_1^2 + v_3v_1)p^4\\&\quad + 4(v_1^5- 6v_1^3v_2-12v_1^2v_3+9v_1v_2^2 + 22 v_2v_3   + 38v_5 )p^5+O(p^6).
\end{align*}
\end{enumerate}
\end{remark}

\subsection{Elliptic surfaces}

Let $S\to \PP^1$ be an elliptic surface with a section $B$, $12n$ rational nodal fibres, and no other singular fibres. We take $n \geq 2$.
The canonical class is $K_S=(n-2)F$, where $F$ denotes the class of a fibre. Then $\chi(\O_S)=n$ and $B^2=-n$. Moreover, the Seiberg-Witten basic classes of $S$ are $0,F, \ldots, (n-2)F$ and
$$
\SW(pF) = (-1)^p \binom{n-2}{p}.
$$
This follows from the description of SW basic classes and invariants in the proof of Proposition \ref{propdisconn}. For polarizations $H$ such that $c_1H > 2K_SH$ is odd, we determined
$$
\sfZ_{S,\epsilon B+d F}^\star(p,u^\star) \mod p^{\min\{28-c_1^2-3n,5n+c_1^2-2\epsilon(n-2)\}},
$$
for all $n=3,\ldots,6$, $\epsilon=0,1$, $d=0,\ldots,8$, and $\star = \cob, \elg$. This allows us to verify Conjecture \ref{generalsurfconj} in this range.

\subsection{Double covers}

We consider double covers $\pi:S\to \PP^2$ branched along a smooth curve of degree $8$.
Then $K_S^2=2$, $\chi(\O_S)=4$ and we note that $|K_S|$ contains smooth connected curves.
Hence the Seiberg-Witten basic classes are $0$, $K_S$ with Seiberg-Witten invariants $\SW(0)=1$, $\SW(K_S)=(-1)^{\chi(\O_S)}=1$. In this section we assume that the strong form of Mochizuki's formula holds (Remark \ref{strongform}).

We denote by $L$ the pullback of the hyperplane class on $\PP^2$. We assume for simplicity that 
$\Pic(S)=\Z L$ and take polarization $H=L$. Then there are no rank 2 strictly $\mu$-semistable sheaves on $S$ with first Chern class $c_1=L$. For both $\star = \cob, \elg$, we determined
\begin{align*}
\sfZ_{S,L}^\star(p,u^{\star}) \mod p^{14}, \\
\sfZ_{S,2L,\odd}^\star(p,u^\star) \mod p^8,
\end{align*}
where ``$\odd$'' means that we only sum over $c_2$ odd, so that there are no rank 2 strictly Gieseker semistable sheaves on $S$ with Chern classes $c_1=2L$ and $c_2$. We verified Conjectures \ref{conjell} and \ref{conjcob} in this range. In particular, for  $M=M^L_S(2,L,4)$ we find
$$
c^\vir_1(M)^2=48, \, c^\vir_2(M)=120.
$$

Next we consider double covers of $\pi : S \rightarrow \PP^1\times\PP^1$ branched along a smooth curve of bidegree $(6,6)$ and $(6,8)$.
Denote the pull-backs of the classes of $\PP^1 \times \{\mathrm{pt}\}$ and $\{\mathrm{pt}\} \times \PP^1$ by respectively $\widetilde{B}$ and $\widetilde{F}$, and let $c_1=\epsilon_1 \widetilde{B}+\epsilon_2 \widetilde{F}$. For a suitably chosen polarization $H$, $\star = \cob, \elg$, $\epsilon_1=0,1$, and $1 \leq \epsilon_2\le 5$, we determined
\begin{align*}
&\sfZ_{S,c_1}^\star(p,u^\star) \mod p^{\min\big\{25-4(\epsilon_1+\epsilon_2-\epsilon_1\epsilon_2),13-4\epsilon_1\epsilon_2,-3+4(2\epsilon_1+2\epsilon_2-\epsilon_1\epsilon_2)\big\}}, \quad \mathrm{for \ bidegree \ } (6,6)  \\
&\sfZ_{S,c_1}^\star(p,u^\star) \mod p^{\min\big\{35-4(2\epsilon_1+\epsilon_2-\epsilon_1\epsilon_2),7-4\epsilon_1\epsilon_2,-25+4(4\epsilon_1+2\epsilon_2-\epsilon_1\epsilon_2)\big\}}, \quad \mathrm{for \ bidegree \ } (6,8).  
\end{align*}
We verified Conjectures \ref{conjell} and \ref{conjcob} in this range. E.g.~for bidegree $(6,8)$ and $M:=M^H_S(2,\widetilde{B},6)$ we find 
$$
c_3^\vir(M)=-36864, \, c_1^\vir(M)c_2^\vir(M)= -67584, \, c_1^\vir(M)^3=- 90112.
$$
This is in accordance with Conjecture \ref{conjcob}.

Denote by $\FF_1 = \PP(\O_{\PP^1} \oplus \O_{\PP^1}(1))$ the first Hirzebruch surface. Let $B$ be the section satisfying $B^2=-1$ and let $F$ be the class of the fibre $\FF_1 \rightarrow \PP^1$. We consider double covers $\pi : S \rightarrow \FF_1$ branched over a smooth connected curve in
$$
|\O_{\FF_1}(6B + 10F)|.
$$
Denote the pull-backs of $B,F$ by $\widetilde{B}, \widetilde{F}$ respectively and let $c_1=\epsilon_1 \widetilde{B}+\epsilon_2 \widetilde{F}$. For a suitably chosen polarization $H$, $\star = \cob, \elg$, $-2\leq \epsilon_1\le 6$, and $-2\leq \epsilon_2\le 6$, we determined $\sfZ_{S,c_1}^\star(p,u^\star)$ modulo $p^{\min\{N+28,M\}}$, where $M,N$ are the explicit expressions given in \cite[Sect.~7.4]{GK1} which we do not reproduce here. We verified Conjectures \ref{conjell} and \ref{conjcob} in this range. E.g.~for a suitably chosen polarization $H$ and $M:=M^H_S(-\widetilde{B}+3\widetilde{F},2)$, we find
\begin{align*}
&c_4^\vir(M)=85920, \, c_1^\vir(M) c_3^\vir(M)=161088, \, c_2^\vir(M)^2=241056, \\
&c^\vir_1(M)^2c_2^\vir(M)=279936, \, c_1^\vir(M)^4=345600.
\end{align*}
This is in accordance with Conjecture \ref{conjcob}.

\subsection{Hypersurfaces}

Finally we consider the very general quintic in $S \subset \PP^3$. Then $\Pic(S)$ is generated by the hyperplane class $H$ and $K_S^2 = \chi(\O_S) = 5$. Since $|K_S|$ contains smooth connected curves, the Seiberg-Witten basic classes are $0$, $K_S$ and $\SW(0) = 1$, $\SW(K_S) = (-1)^{\chi(\O_S)} = -1$. We assume that the strong form of Mochizuki's formula holds (Remark \ref{strongform}). 
For $\star = \cob, \elg$, we determined
\begin{align*}
\sfZ_{S,H}^\star(p,u^\star) \mod p^{8}.
\end{align*}
Our answers agree with Conjectures \ref{conjell} and \ref{conjcob}. E.g.~for $M:=M^H_{S}(2,H,6)$ we find that
\begin{align*}
&c^\vir_4(M)=52720, \, c^\vir_1(M)c^\vir_3(M)=93280, \, c_2(M)^2=145200, \, c_1^\vir(M)^2c_2^\vir(M)=157760, \\ 
&c_1^\vir(M)^4=185600.
\end{align*}
This is in accordance with Conjecture \ref{conjcob}.

\begin{remark}
It is remarkable that in all the examples that we computed, the Chern numbers $c_{i_1}^\vir(M)\cdots c_{i_k}^\vir(M)$ have the sign $(-1)^{\vd(M)}$.
This is reminiscent of  \cite[Rem.~5.5]{EGL}, where it is noted that at least for $n\le 7$ all Chern numbers of $S^{[n]}$ are polynomials in $c_1(S)^2$ and $c_2(S)$ with positive coefficients.
\end{remark}

\subsection{Verification of Conjecture \ref{numconj}}

For both $\star \in \{\cob,\elg\}$, we checked Conjecture \ref{numconj} modulo
$$
\mod p^{\min\Big\{2\beta_1-2\beta_2+8\beta_4, 28-4\gamma_1+4\gamma_2,28+4\beta_2-4\beta_3+4\gamma_1-4\gamma_2\Big\}-\beta_1-3\beta_4}
$$
and for $\gamma_1=0,\ldots, 4$, $\gamma_2=0,\ldots,4$, $\beta_1=\beta_2=4,\ldots,10$, $\beta_3=2,\ldots,5$, $\beta_4=2,\ldots,5$. 
The above bounds come from: (1) the bound $\mod p^{\beta_1-2\beta_2+5\beta_4}$ which is part of the statement of Conjecture \ref{numconj} and (2) the order to which we calculated $A_i^\star$ as stated at the beginning of this section (see also \eqref{defsfA}). This allows us to calculate up to $S^{[n_1]} \times S^{[n_2]}$ with $n_1+n_2 \leq 6$. As throughout this section, for $\star=\cob$ we used the specialization $v_i=0$ for $i\ge 6$.


{\tt{gottsche@ictp.trieste.it, m.kool1@uu.nl}}
\end{document}